\newtheorem{thm}{Theorem}[section]
\newtheorem{cor}[thm]{Corollary}
\newtheorem{lem}[thm]{Lemma}
\newtheorem{exa}[thm]{Example}
\theoremstyle{definition}
\newtheorem{dfn}[thm]{Definition}
\theoremstyle{remark}
\newtheorem{rem}[thm]{Remark}
\numberwithin{equation}{section}
\begin{document}

\title[]{Knots in $S_{g} \times S^{1}$ and winding parities}%
\author{S.Kim}

\address{S.Kim, Jilin university}%
\email{ksj19891120@gmail.com}%


\maketitle

\begin{abstract}
A virtual knot, which is one of generalizations of knots in $\mathbb{R}^{3}$ (or $S^{3}$), is, roughly speaking, an embedded circle in thickened surface $S_{g} \times I$. In this talk we will discuss about knots in 3 dimensional $S_{g} \times S^{1}$. We introduce basic notions for knots in $S_{g} \times S^{1}$, for example, diagrams, moves for diagrams and so on. For knots in $S_{g} \times S^{1}$ technically we lose over/under information, but we have information ``how many times a half of the crossing of the knot in $S_{g} \times S^{1}$ rotates along $S^{1}$'', we call it labels of crossings. In this paper we extend this notion more generally and discuss its geometrical meaning. This paper follows from \cite{Kim}.
\end{abstract}

\section{Introduction}
One of generalizations of classical knot theory is {\em virtual knot theory}.
\begin{dfn}
{\em A virtual link} is an equivalence class of virtual link diagrams modulo generalized Reidemeister moves described in Fig~\ref{vir_moves}. That is,

$$\{Virtual~link\} =\{ Virtual~link~diagrams\}/\langle Generalised~Reidemeister~moves \rangle$$
\end{dfn}

\begin{figure}
\centering\includegraphics[width=200pt]{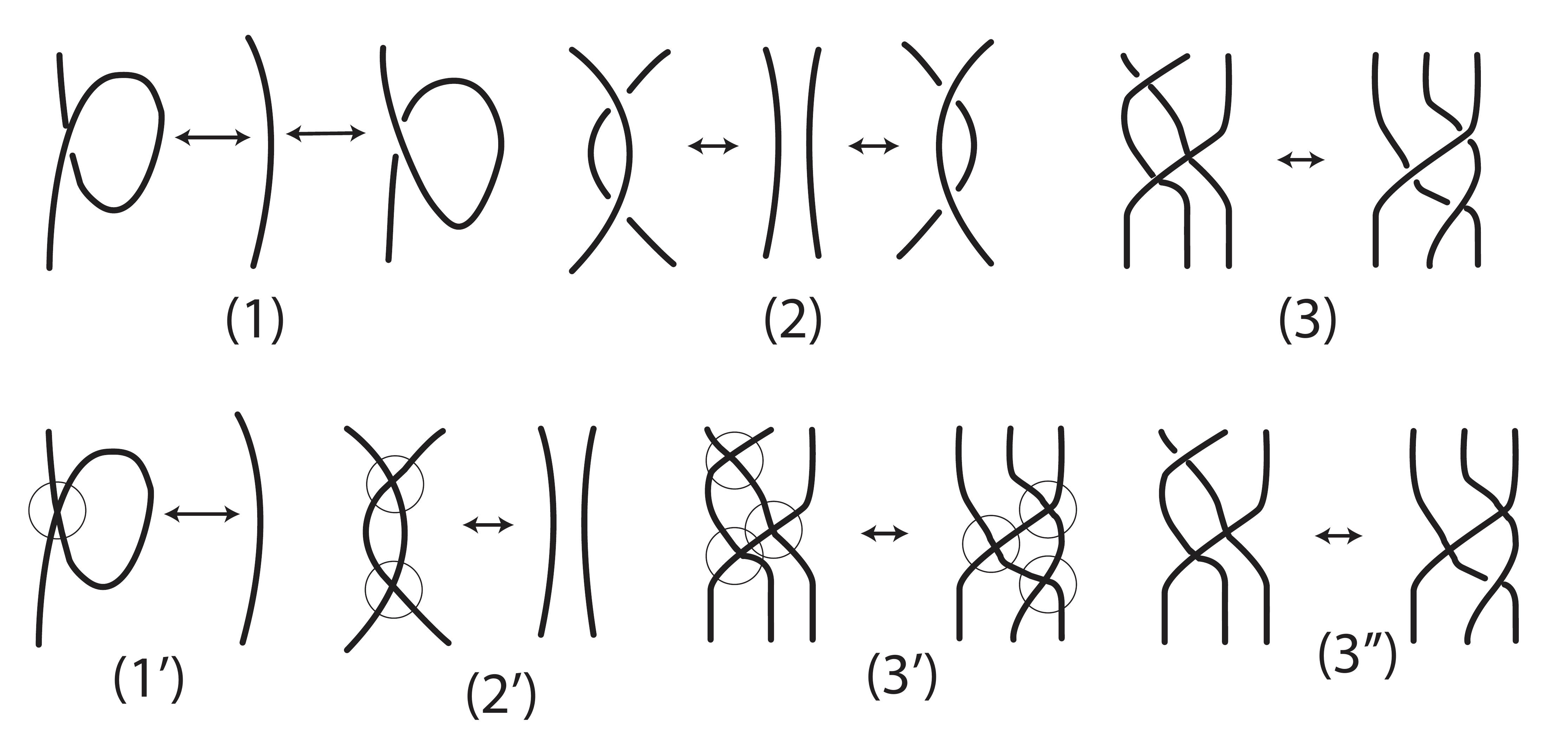} 
\caption{Generalized Reidemeister moves} \label{vir_moves}
 \end{figure}

It is well-known that the virtual links can be considered as links in a thickened surface $S_{g} \times [0,1]$ up to stabil/destabilization, where $S_{g}$ is an orientable surface of genus $g$.

\begin{dfn}
{\em A virtual link} is a smooth embedding $L$ of a disjoint union of $S^{1}$ into $S_{g} \times [0,1]$. Each image of $S^{1}$ is called {\em a component} of $L$. A link of one component is called {\em a virtual knot}.

\end{dfn}

\begin{dfn}
  Let $L$ and $L'$ be two virtual links. If $L'$ can be obtained from $L$ by diffeomorshisms and stabil/destabilizations of $S_{g} \times [0,1]$, then we call $L$ and $L'$ are \textit{equivalent}.
\end{dfn}

In virtual knot theory, by using {\em the parity} defined by V.O. Manturov many invariants for classical knots can be non trivially extended to virtual knots and it gives several interesting geometrical properties, for details, see \cite{Manturov_vir_book}. But, the extended invariants cannot show us new properties of classical knots, because every parity for classical knots is trivial \cite{IlyutkoManturovNikonov}.

In \cite{CM} M. Chrisman and V.O. Manturov studied virtual knots by using 2-component link $K \sqcup J$ with $lk(K,J)=0$, where $J$ is a fibered knot. Roughly speaking, if $J$ is a fibered knot, $S^{3} \backslash N(J)$ is homeomorphic to $\Sigma_{J} \times S^{1}$ where $\Sigma_{J}$ is a Seifert surface of $J$ and $K$ can be considered as a knot in $\Sigma_{J} \times S^{1}$. If $lk(K,J)=0$, then there exists a lifting $\hat{K} \subset \Sigma_{J} \times (0,1) \subset \Sigma_{J} \times [0,1]$ along the covering $p: \Sigma_{J} \times (0,1) \cong \Sigma_{J} \times \mathbb{R} \rightarrow \Sigma_{J} \times S^{1}$ defined by $p(x,r) = (x,e^{2\pi r})$. Then $\hat{K} \subset \Sigma_{J} \times [0,1]$ is placed in a thickened surface, that is, it can be considered as a virtual knot. Moreover, in \cite{CM} it is proved that the $\hat{K}$ is well-defined, that is, if $K \sqcup J$ and $K' \sqcup J'$ are equivalent in $S^{3}$, then $\hat{K}$ and $\hat{K}'$ are equivalent as virtual knots. But, there is a question: what happens if $lk(K,J) \neq 0$?

In \cite{Kim}, the author constructed knots in $S_{g}\times S^{1}$ and local moves.
In \cite{KimSgS1}, the author defined ``labels'' of crossings of knots in $S_{g} \times S^{1}$ and its applications.

This paper is contributed to expand the notion of ``labels'' of crossings of knots in $S_{g} \times S^{1}$. In Section 2, we introduce basic notions of links in $S_{g} \times S^{1}$ and labels of crossings of knots in $S_{g} \times S^{1}$. In Section 3, we define {\em a winding parity} which is a generalization of labels of crossings of knots in $S_{g} \times S^{1}$ defined axiomatically. We introduce examples of winding parities and define an important example, called {\em homological parity} by using 1st homology of ambient space $S_{g}\times S^{1}$.

\section{Links in $S_{g} \times S^{1}$ and its diagrams}

Let $S_{g}$ be an orientable surface of genus $g$. Let us define links in $S_{g} \times S^{1}$ analogously to virtual links by using underlying surfaces as follows:
\begin{dfn}
{\em A link $L$ in $S_{g} \times S^{1}$} is a smooth embedding $L$ of a disjoint union of $S^{1}$ into $S_{g} \times S^{1}$. Each image of $S^{1}$ is called {\em a component} of $L$. A link of one component is called {\em a knot in $S_{g} \times S^{1}$}.

\end{dfn}

\begin{dfn}
  Let $L$ and $L'$ be two links in $ S_{g} \times S^{1}$. If $L'$ can be obtained from $L$ by diffeomorshisms and stabil/destabilization of $S_{g} \times S^{1}$, then we call $L$ and $L'$ are \textit{equivalent}.
\end{dfn}

By the {\em destabilization for $S_{g} \times S^{1}$} we mean the following:
Let $C$ be a non-contractible circle on the surface $S_{g}$ such that there exists a torus $T$ homotopic to the torus $C \times S^{1}$ and not intersecting the link. Then our destabilization is cutting of the manifold $S_{g} \times S^{1}$ along the torus $C \times S^{1}$ and pasting of two newborn components by $D \times S^{1}$.

Now let us construct diagrams for links in $S_{g}\times S^{1}$ on the plane as follows:\\
Let $L$ be an (oriented) link in $S_{g} \times S^{1}$. Assume that counterclockwise orientation is given on $S^{1}$. Suppose that $x_{0} \in S^{1}$ is a point such that $S_{g} \times \{x_{0}\} \cap L(S^{1})$ is a set of finite points with no transversal points. 
\begin{figure}[h]
\begin{center}
 \includegraphics[width = 8cm]{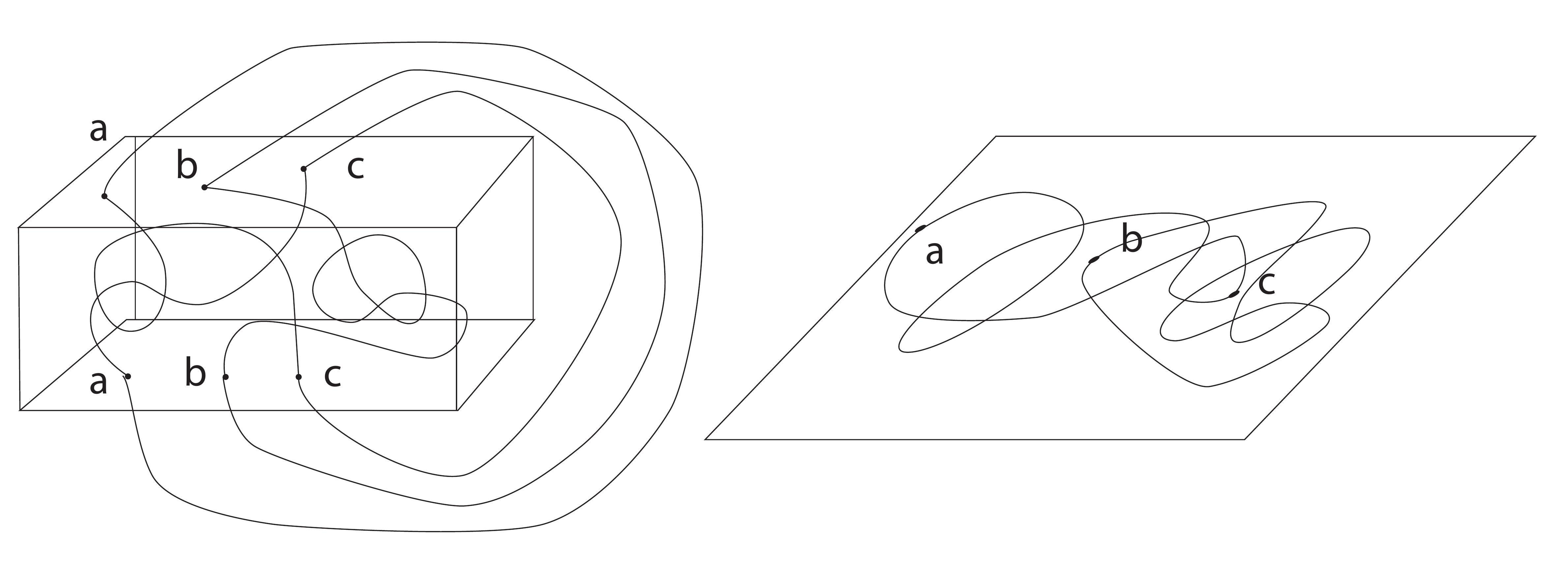}

\end{center}

\caption{}\label{fig:SS-diag-rel}
\end{figure}
Then there exists a natural diffeomorphism $f$ from $(S_{g} \times S^{1} )\backslash$  $(S_{g} \times \{x_{0}\})$ to $S_{g} \times (0,1) \subset S_{g} \times [0,1]$. Let $M_{L}$ $=$ $\overline{f((S_{g} \times S^{1}) - (S_{g} \times \{x_{0}\}))}$. Then $f(L)$ in $M_{L}$ consists of finitely many circles and arcs with exactly two boundaries on $S_{g} \times \{0\}$ and $S_{g} \times \{1\}$. Let $D_{f(L)}$ be the image of a projection of $f(L)$ on the plane. The diagram $D_{f(L)}$ of $L$ on $S_{g}$ has $n$-arcs with vertices and $m$-circles as described in the right of Fig. \ref{fig:SS-diag-rel}. Note that two arcs near to a vertex correspond to arcs near $S_{g} \times \{0\}$ and $S_{g} \times \{1\}$, respectively. We change a vertex to two small lines such that if one of the lines corresponds to an arc which is near to $S_{g} \times \{1\}$, the line is longer than another, as describe in Fig.~\ref{Vertex}.

\begin{figure}[h!]
\begin{center}
 \includegraphics[width = 8cm]{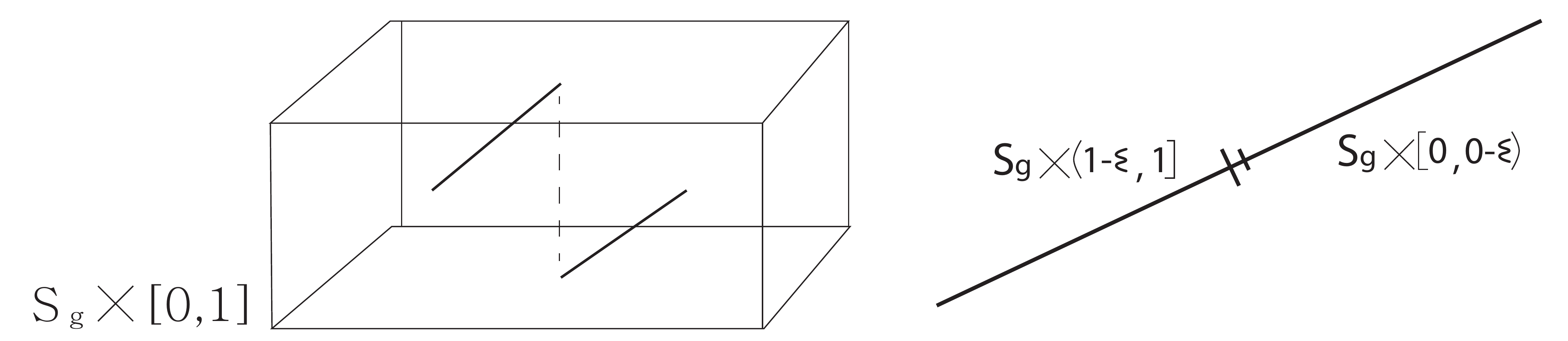}

\end{center}
\caption{}\label{Vertex}
\end{figure}

Since $D_{f(L)}$ is a framed 4-valent graph with double lines on the plane, which comes from $f(L)$ in $S_{g} \times [0,1]$, we can give classical and virtual crossings for each intersections. That is, a link $L$ in $M_{L}$ has a virtual knot diagram with double lines on the plane. The following theorem also holds.
\begin{thm}[M. K. Dabkowski, M. Mroczkowski (2009) \cite{DabkowskiMroczkowski}, Kim (2018) \cite{Kim}]\label{thm:diag_on_plane}
   Let $L$ and $L'$ be two links in $S_{g} \times S^{1}$. Let $D_{L}$ and $D_{L'}$ be diagrams of $L$ and $L'$ on the plane, respectively. Then $L$ and $L'$ are equivalent if and only if $D_{L'}$ can be obtained from $D_{L}$ by applying the following moves in Fig.~\ref{moves2}.

  \begin{figure}[h!]
\begin{center}
 \includegraphics[width = 12cm]{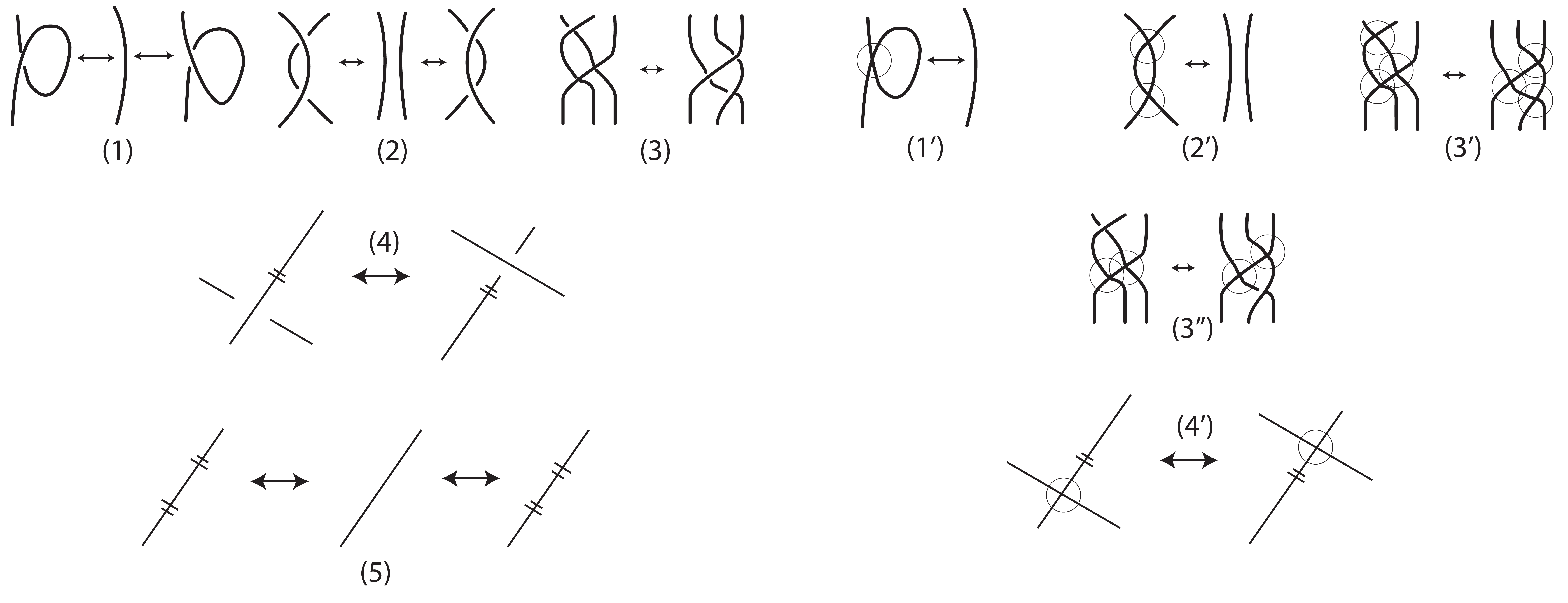}

\end{center}

 \caption{Moves for links in $S_{g}\times S^{1}$}\label{moves2}
\end{figure}
\end{thm}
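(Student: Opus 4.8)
The plan is to prove the two implications separately. The ``if'' direction (soundness) amounts to checking that each move in Fig.~\ref{moves2} is induced by an honest equivalence of links in $S_g \times S^1$ --- an ambient isotopy, a diffeomorphism, or a (de)stabilization --- so that diagrams related by the moves represent equivalent links. The ``only if'' direction (completeness) is the substantial one: one must show that the listed moves already generate all of the ambiguity in passing from a link to its planar diagram, together with all of the topological events occurring in an equivalence. I would organize the whole argument around the cutting construction already set up in the text: choosing a regular point $x_0 \in S^1$ and cutting along $S_g \times \{x_0\}$ identifies $L$ with a virtual tangle $f(L)$ in $S_g \times [0,1]$ whose top and bottom endpoints are matched by the gluing $S_g \times \{1\} \cong S_g \times \{0\}$, and whose vertices (the double lines of Fig.~\ref{Vertex}) record exactly this matching together with the bookkeeping at the cut.

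For soundness, I would sort the moves of Fig.~\ref{moves2} into two families. The \emph{interior} moves are the classical and virtual Reidemeister moves together with the stabilization move; these take place in $S_g \times (0,1)$ away from the cut, and each is realized by the corresponding local isotopy or (de)stabilization of $S_g \times S^1$, exactly as in the thickened-surface theory of virtual links (cf.\ Fig.~\ref{vir_moves}). The remaining moves are the ones manipulating the double lines and vertices; I would verify geometrically that each corresponds to sliding a strand of $L$ across the cutting fiber $S_g \times \{x_0\}$, and in particular that the ``longer/shorter line'' convention of Fig.~\ref{Vertex} is respected, i.e.\ that the move is precisely the diagram shadow of a strand passing through the surface $S_g \times \{x_0\}$, which is an ambient isotopy of $L$ in $S_g \times S^1$. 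This direction is a finite case check.

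For completeness, I would argue in two stages. First, well-definedness of the diagram up to the moves: the diagram $D_{f(L)}$ depends on the choice of $x_0$ and of the projection of $S_g \times [0,1]$ to the plane. Independence of the projection is the usual detour/Reidemeister argument of virtual knot theory. Independence of $x_0$ is the new point: as $x_0$ traverses $S^1$ generically, the finite set $L \cap (S_g \times \{x_0\})$ changes only at (i) isolated moments where a point of $L$ crosses the fiber transversally, so that a pair of endpoints winds around, and (ii) isolated tangency moments where $L$ touches a fiber, creating or annihilating a pair of endpoints. I would identify (i) and (ii) with precisely the vertex moves of Fig.~\ref{moves2}, so that any two basepoints yield diagrams related by the moves. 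Second, I would realize a given equivalence $L \sim L'$ as a finite concatenation of ambient isotopies and (de)stabilizations, put the whole family in general position with respect to the projection and the fibration, and decompose it into elementary events by the standard stratification argument (Cerf theory for the one-parameter family). Each codimension-one event is either an interior Reidemeister or virtual event, a stabilization, or a fiber-crossing/tangency event, and by the soundness analysis each is one of the listed moves; the diffeomorphism and (de)stabilization steps reduce to the thickened-surface case of \cite{DabkowskiMroczkowski} and \cite{Kim}. Concatenating, $D_{L'}$ is obtained from $D_L$ by the moves.

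The hard part will be the general-position bookkeeping in the completeness direction, specifically controlling the interaction between the cutting fiber $S_g \times \{x_0\}$ and the genus-changing (de)stabilizations, and checking that the tangency events of type (ii) --- births and deaths of endpoint pairs --- produce exactly the vertex moves of Fig.~\ref{moves2} and nothing more. One must ensure that the one-parameter family can be made generic so that fiber-crossing events, Reidemeister events, and stabilization events occur at distinct parameter values and never coincide with a tangency; verifying that the resulting finite move list is genuinely complete (that no move has been omitted) is the crux, and is where I would lean most heavily on the prior analyses in \cite{DabkowskiMroczkowski} and \cite{Kim}.
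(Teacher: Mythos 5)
There is nothing in the paper to compare your proposal against: the paper does not prove Theorem~\ref{thm:diag_on_plane} at all. It is imported as a known result, credited in the theorem header to \cite{DabkowskiMroczkowski} and \cite{Kim}, and the text immediately moves on to remarks and corollaries that \emph{use} it. So the only meaningful review is of your outline on its own terms, measured against what the cited works actually establish.

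On those terms, your strategy is the right one and matches the standard proofs in the literature: soundness by a finite check that each move in Fig.~\ref{moves2} is realized by an isotopy, diffeomorphism, or (de)stabilization of $S_{g}\times S^{1}$; completeness by cutting along a generic fiber $S_{g}\times\{x_{0}\}$, treating $f(L)$ as a virtual tangle in $S_{g}\times[0,1]$, and decomposing a generic one-parameter equivalence into codimension-one events (interior Reidemeister/virtual events, fiber crossings and tangencies giving the double-line moves, and (de)stabilizations along vertical tori $C\times S^{1}$). The genuine gap is that your proposal stops exactly where the theorem begins: the crux --- the Cerf-theoretic classification showing that the listed moves exhaust all codimension-one events, including the interaction of the cutting fiber with destabilization tori and the identification of tangency events with moves 4 and 5 --- is not carried out but is instead ``leaned on'' from \cite{DabkowskiMroczkowski} and \cite{Kim}. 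Since those references are precisely where the theorem is proved, deferring the completeness argument to them makes the proposal circular as an independent proof; as written it is a correct plan plus a citation of the result itself. To turn it into a proof you would need to execute the general-position and stratification analysis explicitly: fix a generic family, list the local models of all degenerate moments (Reidemeister tangencies/triple points, critical points of the $S^{1}$-coordinate on the link, and the (de)stabilization surgeries), and exhibit each as a composition of the moves of Fig.~\ref{moves2} --- in particular checking that a surgery torus can always be isotoped off the cutting fiber, which is the point you correctly flag as delicate but do not resolve.
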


\begin{rem}
The moves (4) and (5) correspond to local deformations of the link $L$ in $S_{g} \times S^{1}$ described in Fig.~\ref{fig:localmove4} and \ref{fig:localmove5}.
 \begin{figure}[h!]
\begin{center}
 \includegraphics[width = 6cm]{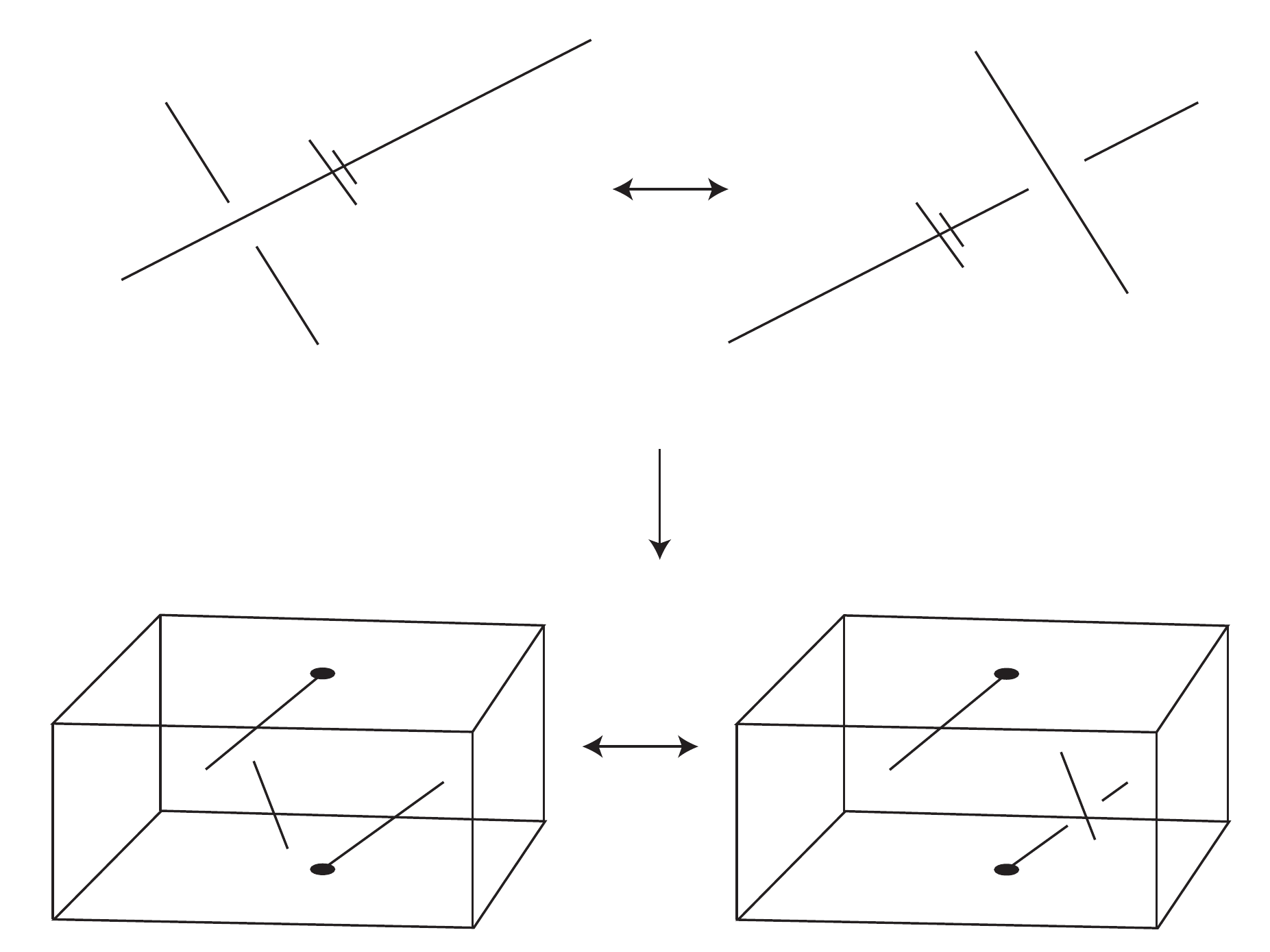}

\end{center}

\caption{Local image of a link corresponding to move 4}\label{fig:localmove4}
\end{figure}
 \begin{figure}[h!]
\begin{center}
 \includegraphics[width = 6cm]{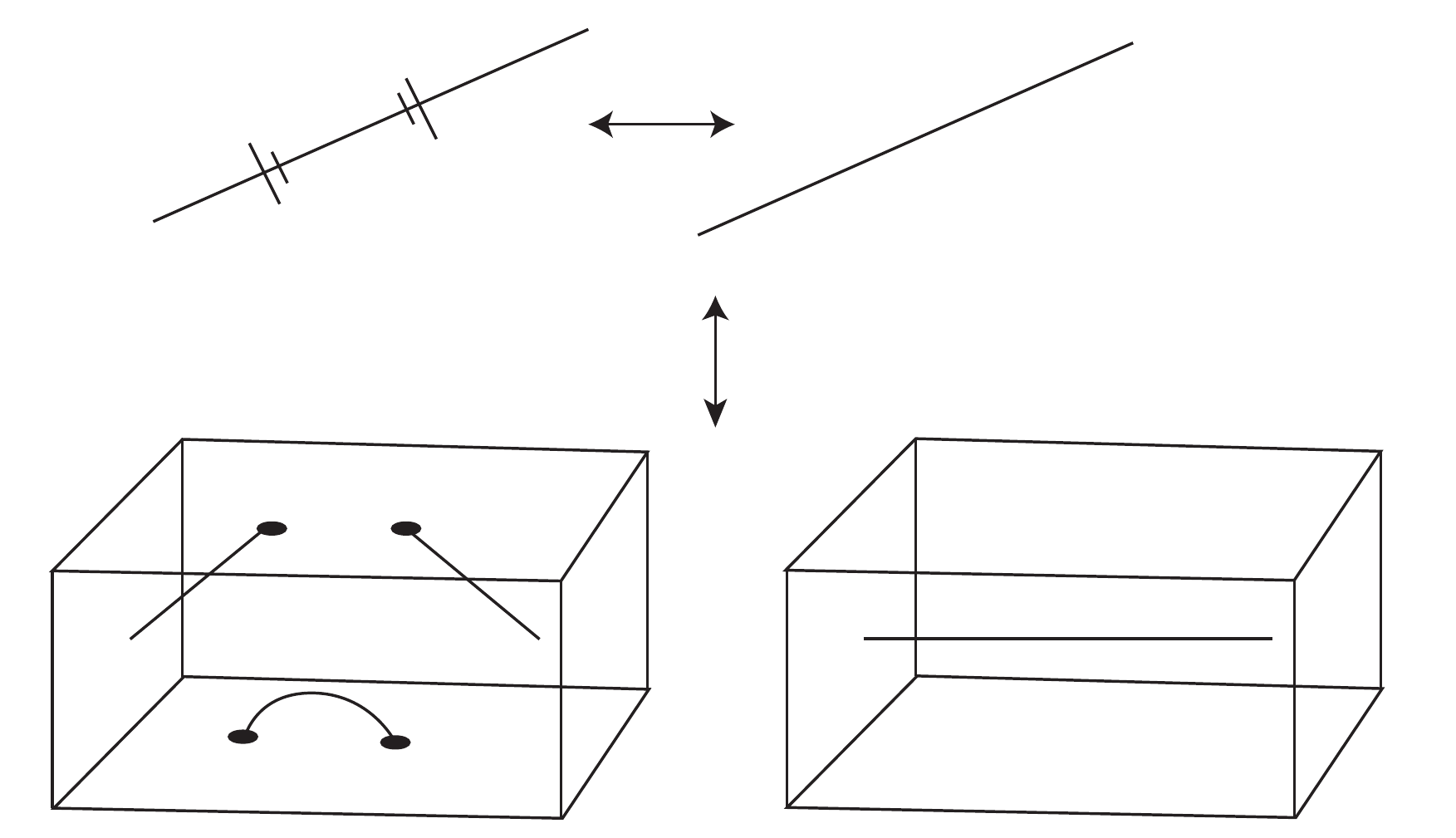}

\end{center}

\caption{Local image of a link corresponding to move 5}\label{fig:localmove5}
\end{figure}
\end{rem}
On the base of Theorem~\ref{thm:diag_on_plane} we can study knots by means of diagrams modulo local moves.

\begin{rem}
As described in Fig. \ref{cro_change}, by adding two double lines one can change over/under information of a crossing. 

  \begin{figure}[h!]
\begin{center}
 \includegraphics[width = 8cm]{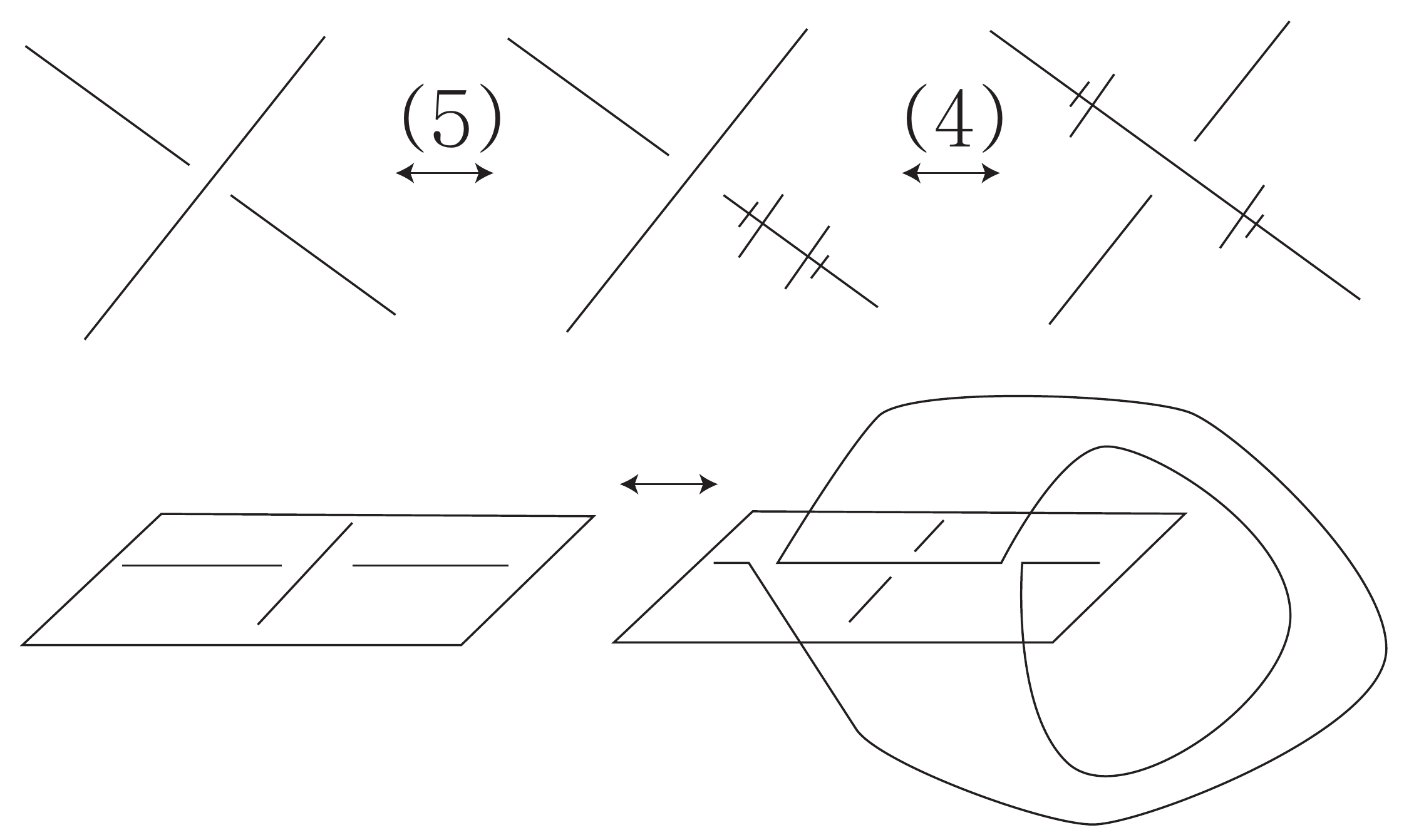}

\end{center}

 \caption{A crossing change with additional two double lines}\label{cro_change}
\end{figure}
\end{rem}
\begin{cor}
The moves in Theorem~\ref{thm:diag_on_plane} can be reformutaed by replacing the move 4 in Fig. \ref{moves2} by the move 4' as in Fig. \ref{moves3}.
 \begin{figure}[h!]
\begin{center}
 \includegraphics[width = 12cm]{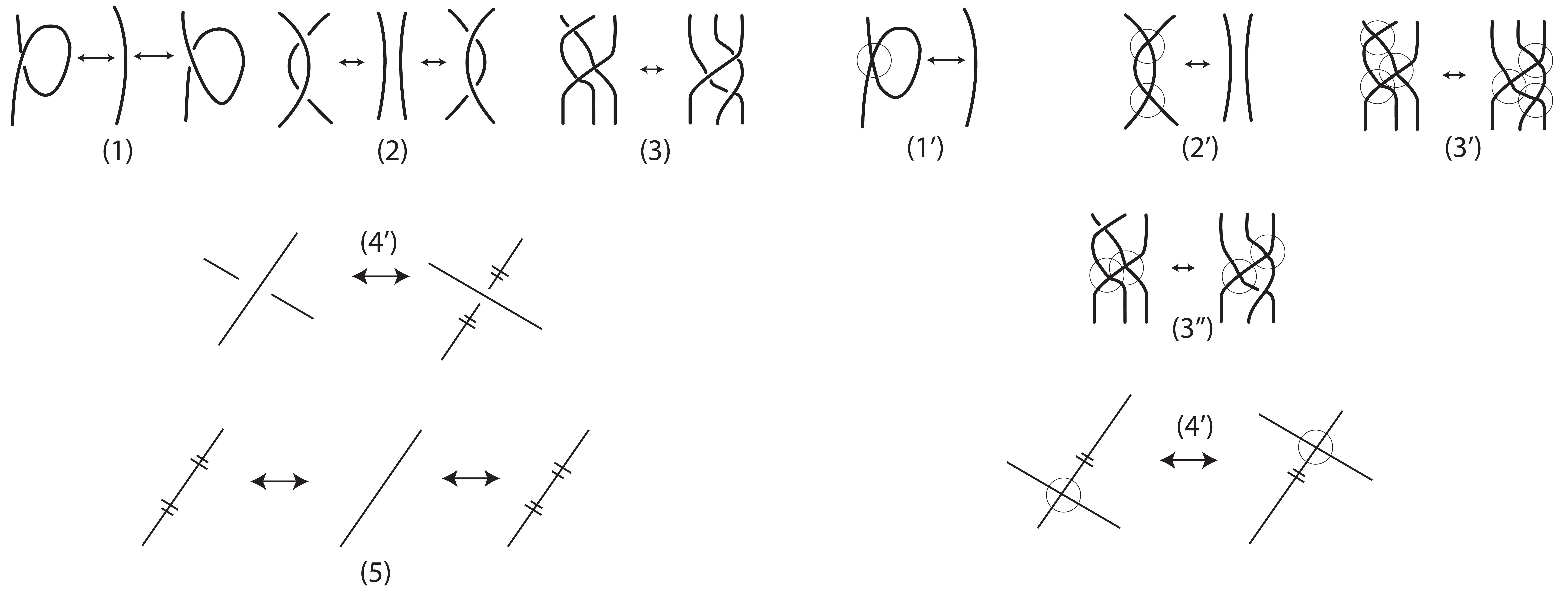}

\end{center}
 \caption{Moves for links in $S_{g} \times S^{1}$ obtained by replacing move 4 by move 4'}\label{moves3}
\end{figure}
\end{cor}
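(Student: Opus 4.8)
The plan is to show that the two systems of local moves---$\{1,2,3,4,5\}$ of Fig.~\ref{moves2} and $\{1,2,3,4',5\}$ of Fig.~\ref{moves3}---generate the same equivalence relation on diagrams, so that by Theorem~\ref{thm:diag_on_plane} each presents equivalence of links in $S_{g}\times S^{1}$. Since moves $1$, $2$, $3$ and $5$ are common to both systems, it suffices to establish two implications: (a) move $4'$ can be realised by a finite sequence of moves $1,2,3,4,5$; and (b) move $4$ can be realised by a finite sequence of moves $1,2,3,4',5$. Once both derivations are in hand, any chain of moves witnessing an equivalence in one system can be rewritten term-by-term into a chain in the other, which is exactly what the reformulation asserts.

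The engine of both derivations is the crossing change of Fig.~\ref{cro_change}: adding a pair of double lines to the two strands meeting at a classical crossing reverses its over/under information. I would first record this as a \emph{derived} operation and check that it is produced using only moves $1,2,3$ together with move $5$ (sliding the newly created double lines into the prescribed position and pairing them off), so that the crossing change is available inside \emph{both} move systems. The point is that the diagrams of move $4$ and move $4'$ are designed to differ precisely at one classical crossing, whose over/under datum in $4'$ is the reverse of that in $4$, with the accompanying double lines arranged according to the longer/shorter convention of Fig.~\ref{Vertex}.

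Granting the derived crossing change, direction (a) proceeds as follows: starting from the left-hand side of move $4'$, apply the crossing change to the relevant crossing so that its over/under data matches that of move $4$; apply move $4$; then apply the crossing change again to restore the over/under data required by the right-hand side of move $4'$. The auxiliary double lines introduced by the two crossing changes are removed by moves $1,2,3,5$. Direction (b) is obtained by running the identical argument with the roles of $4$ and $4'$ interchanged, which is legitimate because the crossing change is involutive up to the common moves.

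The main obstacle will be the bookkeeping of the double lines rather than any conceptual difficulty: each application of Fig.~\ref{cro_change} injects two extra double lines, and one must verify that every such auxiliary line can be transported past the surrounding crossings and cancelled in pairs using only moves $1,2,3,5$, while respecting the longer/shorter (i.e.\ $S_{g}\times\{1\}$ versus $S_{g}\times\{0\}$) labelling of Fig.~\ref{Vertex}. Keeping this orientation datum consistent through the two successive crossing changes is the delicate point; once it is checked, implications (a) and (b) together close the argument.
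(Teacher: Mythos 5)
Your overall architecture---deriving move $4'$ inside $\{1,2,3,4,5\}$ and move $4$ inside $\{1,2,3,4',5\}$ by conjugating with the crossing change of Fig.~\ref{cro_change}---is exactly the route the paper takes: the paper offers no separate proof of the corollary and treats it as immediate from the remark accompanying Fig.~\ref{cro_change}, and you correctly note that both directions are needed. However, one step in your write-up fails as stated: the claim that the crossing change itself ``is produced using only moves $1,2,3$ together with move $5$,'' so that it is available in both systems for free. It is not. Moves $1,2,3$ are (generalized) Reidemeister moves that do not touch the double lines, and move $5$ creates or cancels double lines without interacting with any classical crossing (consistent with the fact that the paper's parity arguments only need to check crossings against moves $1$--$4$). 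Consequently, under $\{1,2,3,5\}$ alone, the underlying virtual knot diagram obtained by forgetting the double lines is preserved up to generalized Reidemeister moves; a crossing change generically alters the underlying virtual knot type (it can turn the virtual trefoil into the unknot), so it cannot be a consequence of these common moves. The only moves in either list that allow a double line to pass a classical crossing---which is what exchanging over/under data requires---are move $4$ and move $4'$ themselves.

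The gap is repairable without changing your architecture: derive the crossing change of Fig.~\ref{cro_change} separately in each system, using move $4$ (together with $1,2,3,5$) in the first system and move $4'$ (together with $1,2,3,5$) in the second; the first derivation is precisely what the paper's remark asserts in the setting of Theorem~\ref{thm:diag_on_plane}, and the primed case is symmetric. With that in place, your direction (a) uses only moves of $\{1,2,3,4,5\}$, your direction (b) uses only moves of $\{1,2,3,4',5\}$, and the conjugation argument closes as you describe, with the bookkeeping of the auxiliary double lines and the longer/shorter convention of Fig.~\ref{Vertex} handled as you anticipate. As written, though, the proposal rests the availability of its key derived operation on the common moves only, and that justification is false.
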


\begin{rem}
From the previous remark, one can say that we lose over/under information for classical crossings which comes from the fiber $[0,1]$ on $S_{g}$. But, instead of it, we can obtain an information from the fiber $S^{1}$ on $S_{g}$, which will be described as ``how many times a half of a crossing turns around''.
\end{rem}
\subsection{Degree of knots in $S_{g}\times S^{1}$} From now on we are mainly interested in {\it knots} in $S_{g} \times S^{1}$. 
The most important information from knots in $S_{g}\times S^{1}$ is ``how many times the knot rotates along $S^{1}$''. More precisely, we consider the natural covering $\Pi: \mathbb{R} \rightarrow S^{1}$ defined by $\Pi(r) = e^{2\pi r i}$. Then the function $Id_{S_{g}} \times \Pi : S_{g}\times \mathbb{R} \rightarrow S_{g}\times S^{1}$ is also a covering over $S_{g}\times S^{1}$ where $Id_{S_{g}} : S_{g} \rightarrow S_{g}$ is the identity map.\\

Let $K : [0,1] \rightarrow S_{g}\times S^{1}$ be a knot with $K(0)=K(1)$.
 Let $\tilde{K}$ be a lifting of $K$ into $S_{g} \times \mathbb{R}$ along a covering $Id_{S_{g}} \times \Pi : S_{g}\times \mathbb{R} \rightarrow S_{g}\times S^{1}$. When $\phi_{2} \circ \hat{K}(0) =0$, {\em the degree $deg(K)$ of a knot $K$ in $S_{g} \times S^{1}$} is defined by 
 $$deg(K) = \phi_{2} \circ \hat{K}(1).$$
 It is easy to see that the degree $deg(K)$ of a knot $K$ in $S_{g} \times S^{1}$ is an invariant.
 
   \begin{figure}[h!]
\begin{center}
 \includegraphics[width = 5cm]{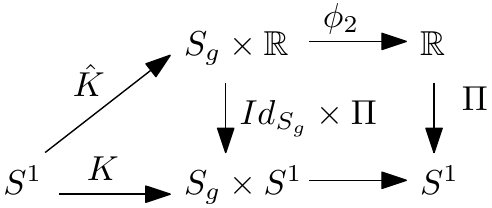}

\end{center}
\end{figure}
 
 For a line segment $l$ of a diagram $D$ of $K$, there is a line segment $l'$ in $\tilde{K}$ in $S_{g} \times \mathbb{R}$ corresponding to $l$. Let $\phi_{2} : S_{g} \times \mathbb{R} \rightarrow \mathbb{R}$ be a natural projection. If $\phi_{2}(l') \in [a, a+1)$, then we give a label $a$ to $l$. We consider the label $a$ as an element of $\mathbb{Z}$.


\begin{rem}\label{remark}
  For labels $a,b$ in the following figure, $b=a+1$.
  \begin{figure}[h!]
\begin{center}
 \includegraphics[width = 3cm]{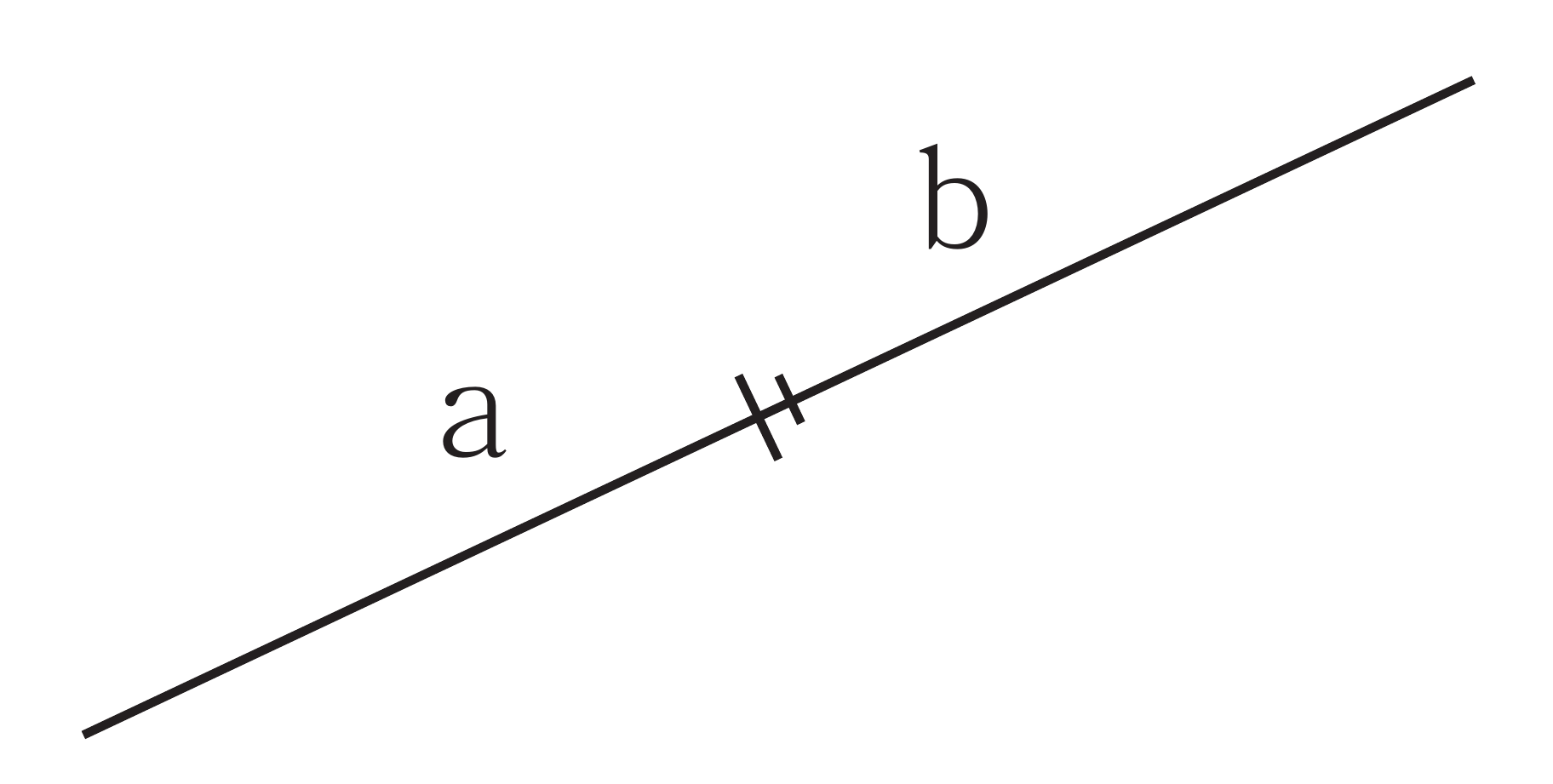}

\end{center}
 \caption{Heights of arcs near to double lines}
\end{figure}
\end{rem}

\begin{exa}
Let $K$ be a knot in $D^{2} \times S^{1}$ as described in Fig.~\ref{exa_label_arc}, where $D^{2}$ is a 2-dimensional disc. The knot $K$ has the degree $2$ and it has a diagram $D_{K}$ of trivial knot with two double lines. One can see that the arc of $K$ colored by red corresponds to the arc of $\tilde{K}$ placed in $D^{2} \times (0,1)$, but the arc of $K$ colored by green corresponds to the arc of $\tilde{K}$ placed in $D^{2} \times (1,2)$. Note that the red and green arcs of $K$ correspond to the arcs of $D_{K}$ colored by red and green respectively. Now we give numbers $0$ and $1$ to red and green arcs of $D_{K}$ respectively. Note that here the numbers $0$ and $1$ are considered as elements in $\mathbb{Z}_{2}$. 
  \begin{figure}[h!]
\begin{center}
 \includegraphics[width =8cm]{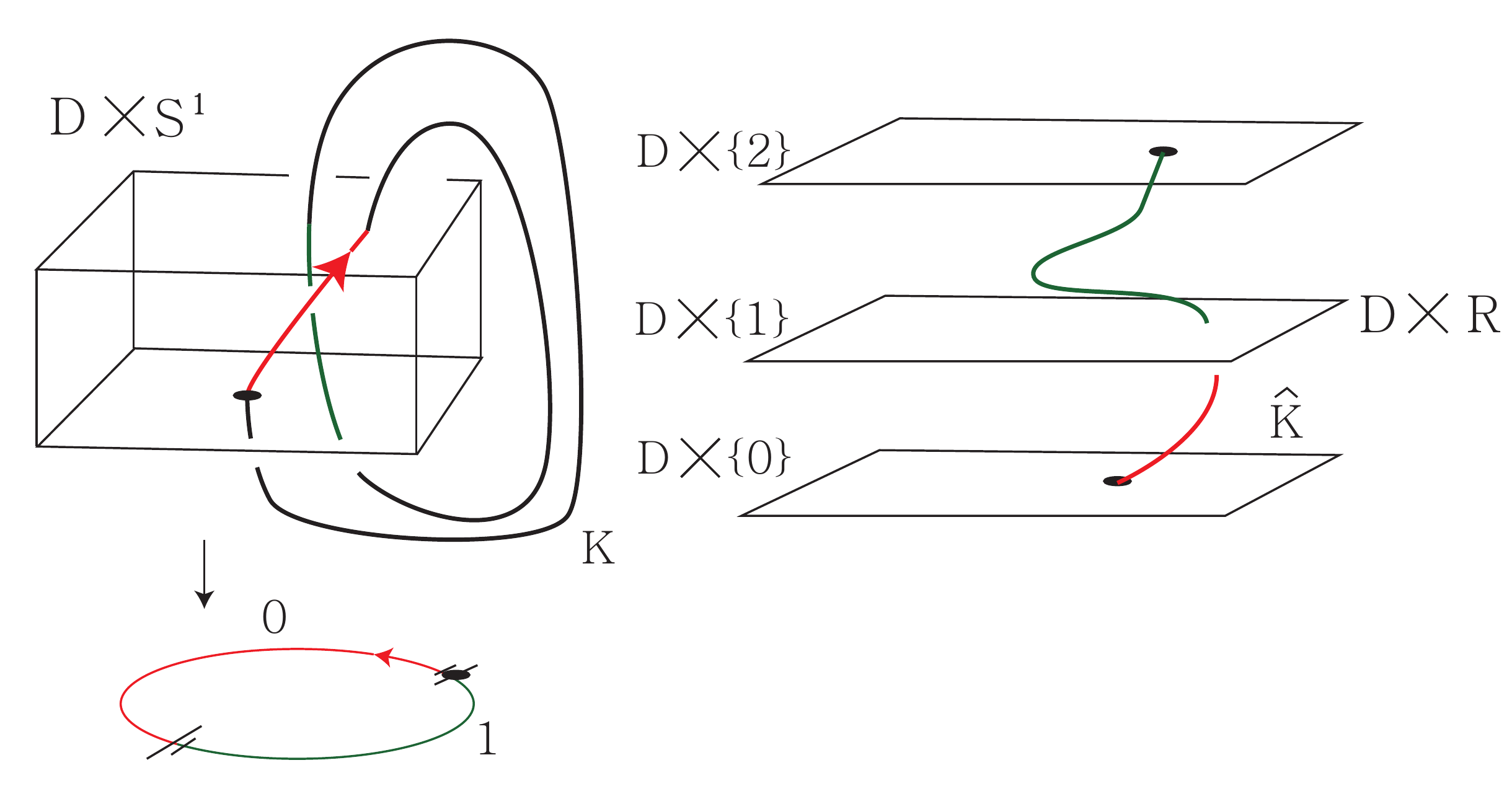}

\end{center}
 \caption{A knot in $D^{2} \times S^{1}$ with degree $2$}\label{exa_label_arc}
\end{figure}
\end{exa}

For each crossing, if over-arc is labeled by a number $b$ and under-arc is labeled by $a$ for some $a,b \in \mathbb{Z}$, then give a label $i = b - a$ to the crossing where $b-a$ is in $\mathbb{Z}$. Then we call $D$ with such labeling for each classical crossing a \textit{labeled diagram}.
\begin{figure}[h!]
\begin{center}
 \includegraphics[width = 3cm]{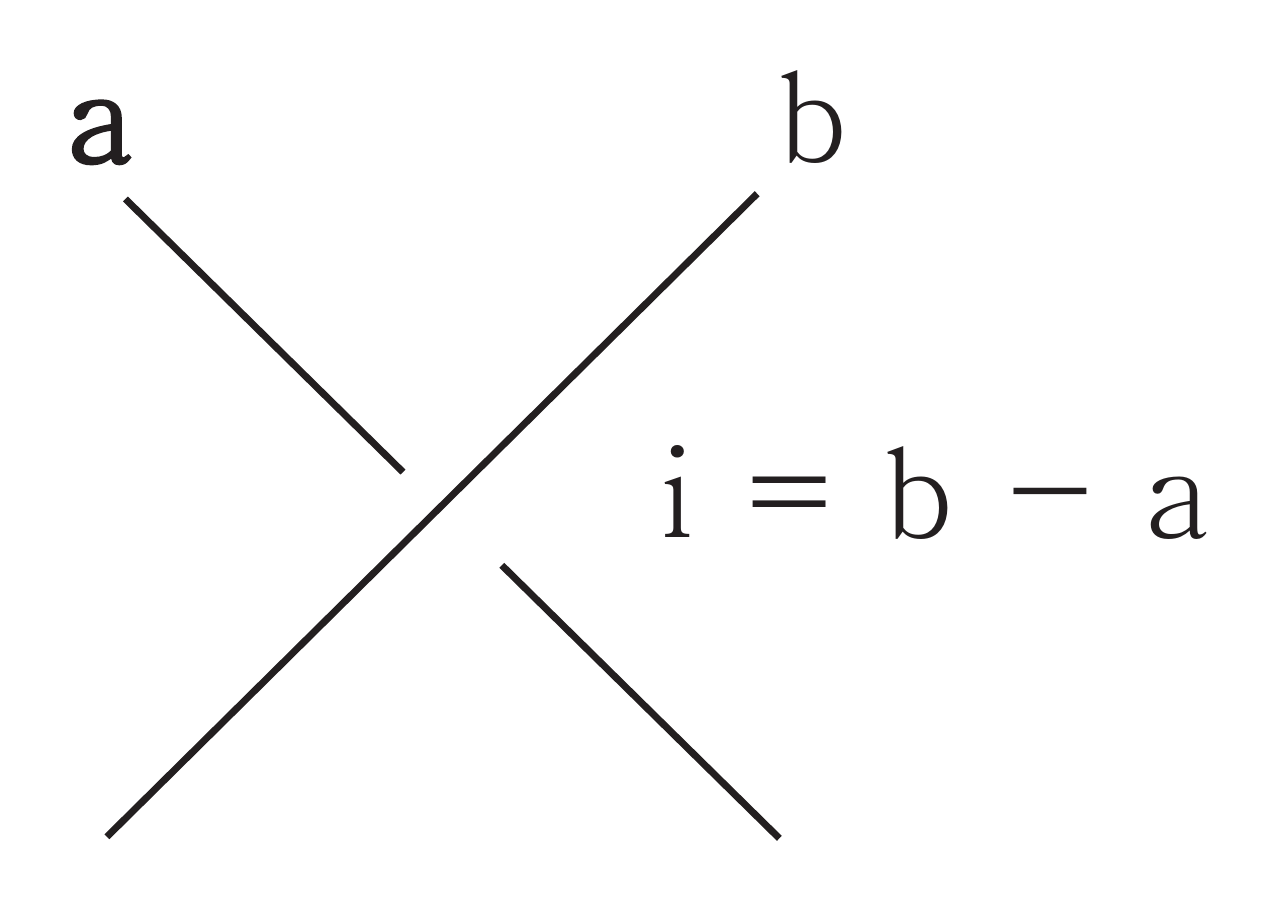}

\end{center}
 \caption{Label of crossing with heights $a$ and $b$ of under crossing and over crossing of a crossing}
\end{figure}
\begin{lem}[\cite{KimSgS1}]
 The labels for crossings satisfy the properties described in Fig.~\ref{labelmoves2}.\label{lem:labelmoves}

  \begin{figure}[h!]
\begin{center}
 \includegraphics[width = 10cm]{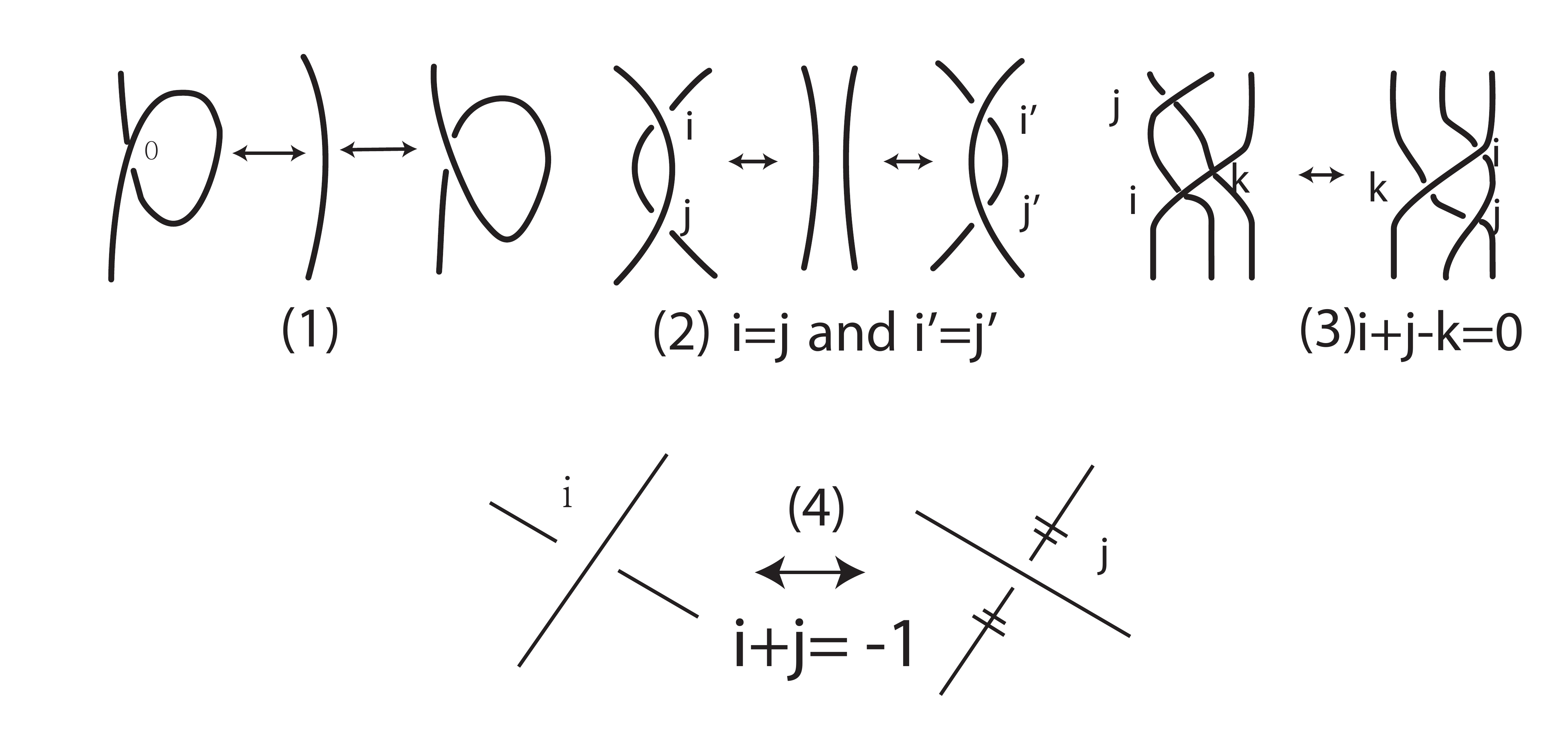}

\end{center}
 \caption{Properties of labels}\label{labelmoves2}
\end{figure}
\end{lem}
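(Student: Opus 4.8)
The plan is to verify each property in Fig.~\ref{labelmoves2} by lifting the relevant local picture to the infinite cyclic cover $Id_{S_{g}}\times\Pi\colon S_{g}\times\Real\to S_{g}\times S^{1}$ and reading off the real heights $\phi_{2}$ of the arcs meeting at each crossing. The first thing I would record is the guiding principle that a lift $\tilde{K}\subset S_{g}\times\Real$ of $K$ is unique only up to the deck transformation $(x,r)\mapsto(x,r+n)$ with $n\in\Zeal$; under such a shift every arc label $a$ (the integer with $\phi_{2}$-height in $[a,a+1)$) becomes $a+n$, so every crossing label $b-a$ (over-arc height minus under-arc height) is independent of the chosen lift. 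Hence the crossing labels are genuine invariants of the labeled diagram, and it suffices to check the relations of Fig.~\ref{labelmoves2} for a single fixed lift.

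Next I would isolate the two elementary height facts on which everything rests. First, by Remark~\ref{remark}, passing an arc through a double line raises its height by exactly one sheet, so its label increases by $+1$; this accounts for every place in the figure where a double line is inserted or deleted. Second, at a classical crossing meeting no double line the over-strand and the under-strand each keep a locally constant height, so the two over-arcs carry a common label $b$, the two under-arcs a common label $a$, and the crossing label is $b-a$ by definition. Together these give the local ``flow'' relations among the four arcs at a crossing.

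With this in hand I would run through the moves of Theorem~\ref{thm:diag_on_plane} one at a time. For the virtual/detour moves and the moves that merely rearrange double lines, no classical crossing is created or destroyed and the heights of the surviving arcs are unchanged, so all labels are preserved. For the first Reidemeister-type move the two branches at the kink belong to the same strand, and tracking $\phi_{2}$ once around the kink shows the new crossing receives exactly the label dictated by the number of double lines swept. For the second Reidemeister-type move the two new crossings are formed by the same ordered pair of strands with the over/under roles exchanged, so if one is labeled $i=b-a$ the other is labeled $a-b=-i$, which is the paired relation shown.

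The hard part will be the third Reidemeister-type move, since it involves three strands crossing pairwise and one must confirm that all three crossing labels match before and after the middle strand is pushed across. I would assign heights $a,b,c$ to the three strands, express the three crossing labels as the corresponding pairwise differences among $a,b,c$, and observe that the lifted move is again a local isotopy in $S_{g}\times\Real$ that does not cross a sheet boundary, so none of the three heights changes. Matching the before/after labels then reduces to the cocycle identity $(b-a)+(c-b)=(c-a)$, so the three differences determine one another consistently. The only subtlety is the presence of double lines on the moving strands, which can shift an individual height by an integer; but since the labels enter only through differences, this additivity absorbs the shifts and the identity above still closes. Carrying this bookkeeping out carefully is the crux of the argument.
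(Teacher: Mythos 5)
There are two genuine problems with your proposal, both at the moves you treat most casually.

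First, your Reidemeister~II step is wrong. In a second Reidemeister move the \emph{same} strand passes over the other strand at \emph{both} crossings; the over/under roles are not exchanged between the two crossings. Since the label of a crossing is defined as (label of over-arc)$\,-\,$(label of under-arc) with no crossing sign attached, and no double line separates the two crossings, both crossings receive the \emph{same} label $i=b-a$, not $i$ and $-i$. (Your claimed relation is the axiom of an \emph{oriented} parity, which one only gets after multiplying by $sgn(v)$, cf.\ the remark following the definition of oriented parity; it is also confirmed by relation (2) of the universal winding parity, $1_{K,f_{*}(v_{1})}=1_{K,f_{*}(v_{2})}$, that the correct property is equality.)

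Second, and more seriously, you never treat move 4 (equivalently move 4$'$, the crossing change accompanied by two new double lines as in Fig.~\ref{cro_change}), and your blanket assertion that ``moves that merely rearrange double lines'' preserve all labels is false precisely for this move. This move is the one special to $S_{g}\times S^{1}$, and under it the label of the affected crossing changes: pushing the over-arc across $S_{g}\times\{x_{0}\}$ shifts its lift by one sheet (Remark~\ref{remark}) \emph{and} swaps over/under at the crossing, so a crossing of label $i=b-a$ becomes one of label $j=a-(b\mp 1)=-i\pm1$, i.e.\ $i+j$ equals a fixed element of $\mathbb{Z}$ (namely $a=-1$ in the paper's convention). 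This relation is the characteristic property of the labels --- it is exactly what makes them a \emph{winding} parity rather than an ordinary parity, and it is the property the axiomatics of Section~3 is built around --- so omitting it leaves the main content of Fig.~\ref{labelmoves2} unproven. Note also that the present paper does not prove this lemma itself (it cites \cite{KimSgS1}); the closest argument in the paper is the proof of the theorem on the homological winding parity, where the analogue of this missing step is the computation $[\gamma_{c}]+[\gamma_{c'}]=[K]-[\{*\}\times S^{1}]$ for move 4. Your cyclic-cover/height approach is the natural one for the labels and would work, but only if you carry out this computation; a smaller point in the same direction is that for a knot of nonzero degree $d$ the integer $b-a$ also depends on the choice of basepoint of the lift (it is well defined only modulo $d$), which your deck-transformation remark alone does not settle.
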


\begin{rem}
Geometrically, the label of a crossing $c$ means how many times the curve from the crossing $c$ to itself turns around $S^{1}$.
\end{rem}

\begin{rem}

If we consider the labels for classical crossing modulo $2$, then it becomes the parity. If we separate classical crossings by label $0$ and others, then it is a weak parity.
\end{rem}

\section{Winding parity for knots in $S_{g} \times S^{1}$}
In the present section, let us extend the ``labels'' axiomatically.
\begin{dfn}
Let $A$ be an abelian group. A {\em winding parity on diagrams of a knot $\mathcal{K}$ with coefficients in $A$} is a family of maps $p_{K} : \mathcal{V}(K) \rightarrow A$, $K \in ob(\mathcal{K})$, such that for any elementary morphism $f:K \rightarrow K'$ conditions described in Fig.~\ref{fig:label-condition} hold:

  \begin{figure}[h!]
\begin{center}
 \includegraphics[width = 10cm]{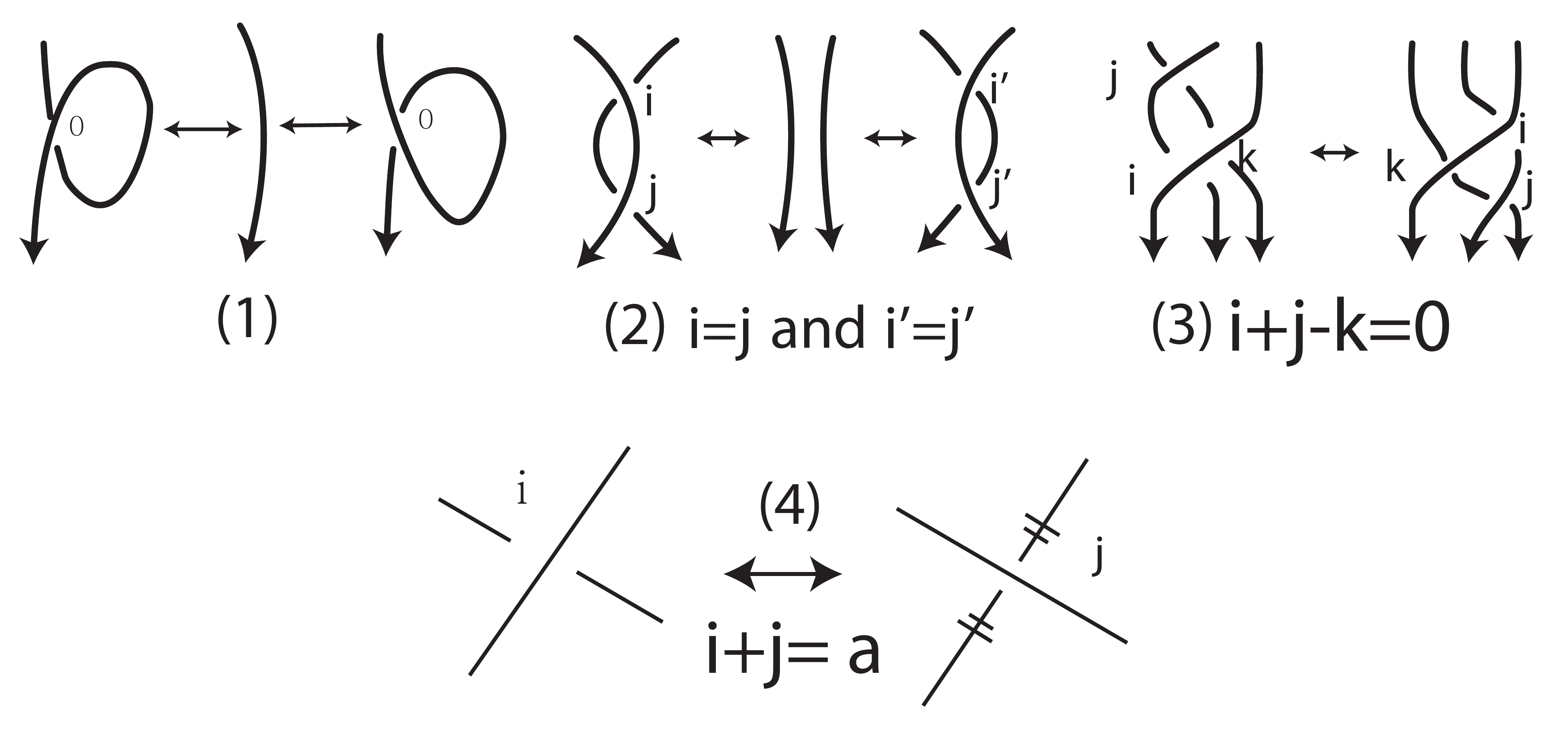}

\end{center}
 \caption{Winding parity condition}\label{fig:label-condition}
\end{figure}
\end{dfn}

\begin{exa}
 The label, which is defined in the previous section, is a winding parity with an abelian group $\mathbb{Z}$ (or $\mathbb{Z}_{n}$) and fixed element $a=-1$.
 \end{exa}
 
 \begin{exa}
 The Gaussian parity is a winding parity with an abelian group $\mathbb{Z}_{2}$ and the fixed element $a=0$.
 \end{exa}
 
 \begin{dfn}[\cite{Nikonov-weakp}]
 {\em An oriented parity $p$} is a family of maps $p_{D}: \mathcal{V}(D) \rightarrow G$ defined for each diagram $D$ of the knot $\mathcal{K}$, that possesses the properties described in Fig.~\ref{fig:label-oriented}.
   \begin{figure}[h!]
\begin{center}
 \includegraphics[width = 8cm]{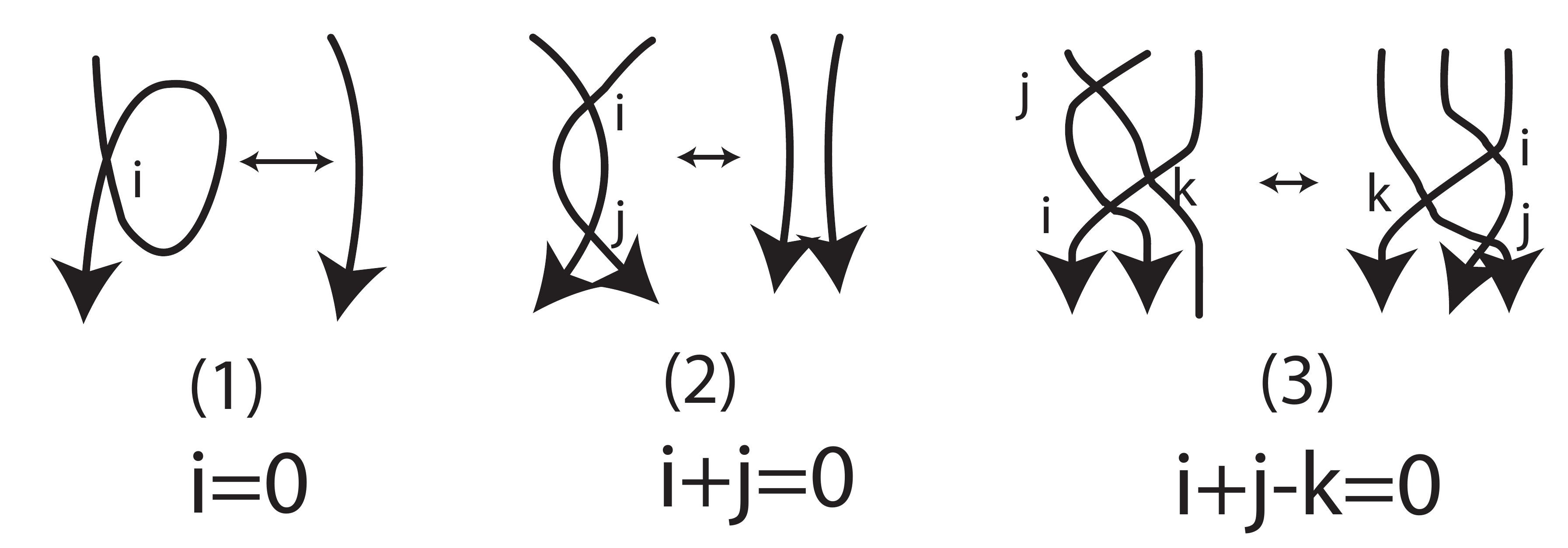}

\end{center}
 \caption{Properties of the oriented parity}\label{fig:label-oriented}
\end{figure}
 \end{dfn}
  \begin{rem}
 Let $\{p_{D}\}$ be a winding parity with $a=0$ in $A$. Then $\{ sgn(v)p_{D}\}$ is an oriented parity. 
 \end{rem}
 
 \subsection{Homological winding parity for crossings of $S_{g}\times S^{1}$}
 
 Let us define labels for crossings of knots in $S_{g}\times S^{1}$ as follows:

Let $c$ be a crossing of a diagram with double lines of a knot in $S_{g} \times S^{1}$. Let us consider its local pre-images in $S_{g} \times [0,1]$ and we connect them by straight lines $v_{c}$ oriented from over crossing to under crossingas described in Fig.~\ref{fig:oriented-half-crossing}. By connecting the straight line with the arc beginning from $c$ to $c$, we obtain a half $\gamma_{c}$. Let us define {\em the homological winding parity of a crossing $c$} by $[\gamma_{c}] \in H_{1}(S_{g}\times S^{1})/[K]$, where $[K]$ is the equivalence class of $K$ in $H_{1}(S_{g}\times S^{1})/[K]$.

 \begin{figure}[h!]
\begin{center}
 \includegraphics[width = 4.5cm]{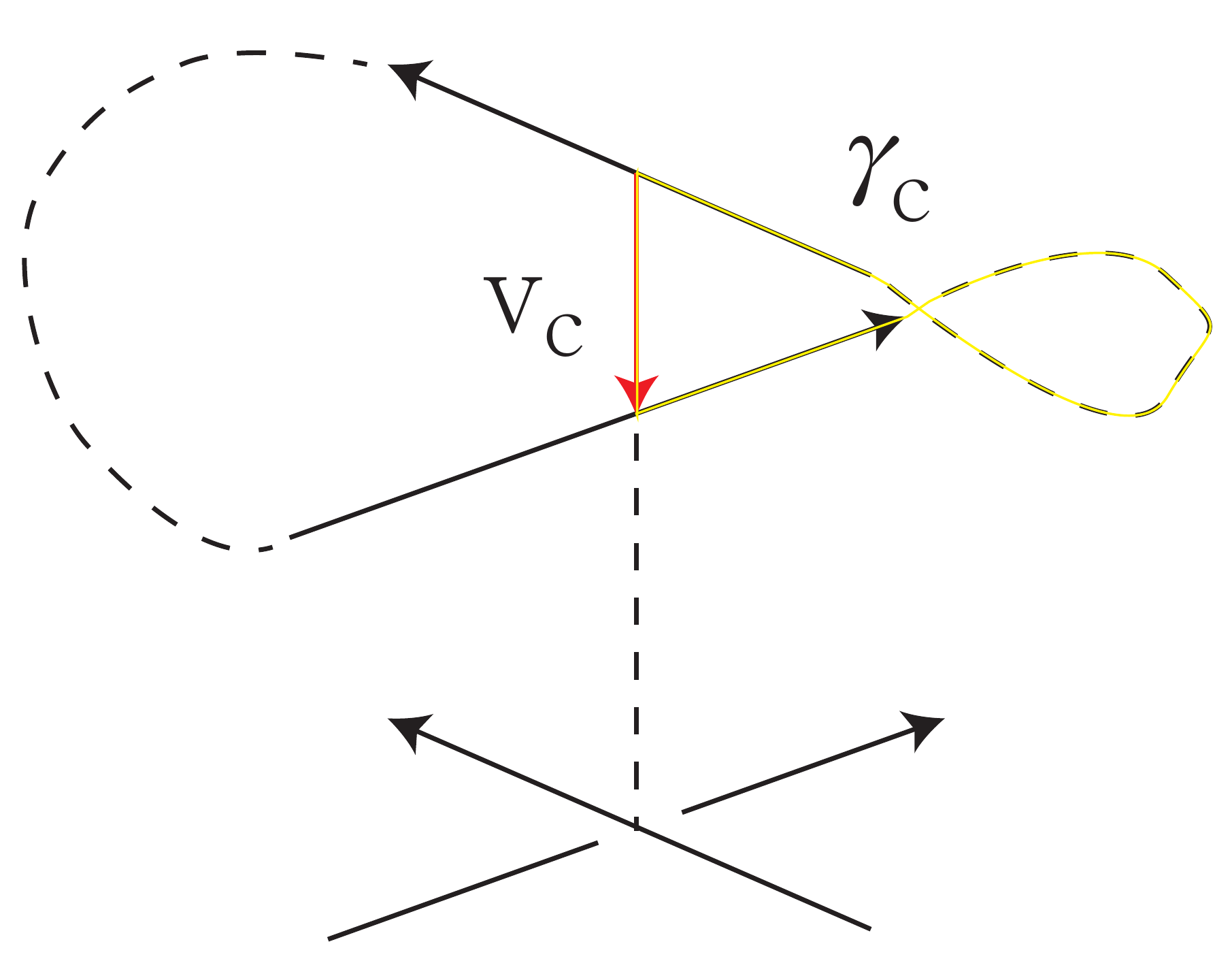}
\end{center}
\caption{A half $\gamma_{c}$ of a crossing $c$}\label{fig:oriented-half-crossing}
\end{figure}

\begin{thm}
The homological winding parity is a winding parity with the abelian group $H_{1}(S_{g} \times S^{1})/[K]$ and the fixed element $a= -[\{*\} \times S^{1}]$.

\end{thm}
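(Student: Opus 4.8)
The plan is to verify that the assignment $c \mapsto p_{c} := [\gamma_{c}] \in H_{1}(S_{g}\times S^{1})/[K]$ obeys each of the conditions of Fig.~\ref{fig:label-condition}, i.e.\ that it transforms in the prescribed way under every elementary morphism $f : K \to K'$; by Theorem~\ref{thm:diag_on_plane} it is enough to treat the generating moves of Fig.~\ref{moves2} (equivalently Fig.~\ref{moves3}). Before touching the individual moves I would record two preliminaries. First, the class $[K]\in H_{1}(S_{g}\times S^{1})$ is itself an invariant: no move, and no diffeomorphism, changes the free homotopy class of $K$, and under a destabilization (cutting along a torus $C\times S^{1}$ disjoint from $K$ and capping with $D\times S^{1}$) the relevant handle class is killed while the fiber class $[\{*\}\times S^{1}]$ and the surviving class $[K]$ correspond under the induced map on $H_{1}$. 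Hence the coefficient group $H_{1}(S_{g}\times S^{1})/[K]$, and with it the fixed element $a=-[\{*\}\times S^{1}]$, is literally the same datum for every diagram of $\mathcal{K}$. Second, $\gamma_{c}$ is well defined up to homotopy: the segment $v_{c}$ is unique up to homotopy rel endpoints inside the fiber, and the orientation of $K$ together with the orientation of $v_{c}$ (over to under) singles out a canonical arc from $c$ to $c$, so $[\gamma_{c}]$ depends only on the diagram and the crossing. I would also establish the basic identity that the two possible halves at a single crossing satisfy $[\gamma_{c}]+[\gamma_{c}']=[K]$, since the two arcs together traverse all of $K$ while the connecting segments cancel; this identity feeds into both the $R2$ and the crossing-change computations.

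Next I would dispose of all moves that neither create nor destroy a classical crossing: the third Reidemeister move, the detour and virtual moves, the double-line slides, and move $5$. In each case the relevant half $\gamma_{c}$ is pushed across an embedded disk (for $R3$ and the detour moves) or is carried by a product isotopy of the local model (for the double-line and fiber moves), so $\gamma_{c}$ changes only by the boundary of a $2$-chain and $[\gamma_{c}]$ is unchanged. This is the bookkeeping half of the argument and should be routine, reducing to the fact that homotopic cycles, and cycles differing by a boundary, carry the same homology class. I would also treat the crossing-change permitted by move $4'$ (Fig.~\ref{cro_change}): exchanging over and under reverses $v_{c}$ and replaces the canonical arc by its complement, so by the identity above $[\gamma_{c}]$ is sent to $-[\gamma_{c}]$ modulo $[K]$, which I would then match against the corresponding entry of Fig.~\ref{fig:label-condition}.

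The two substantive points are the first and second Reidemeister moves. For $R1$ I would compute the class of the new kink crossing directly from its lift along $Id_{S_{g}}\times\Pi$: the canonical arc of the kink bounds a disk on $S_{g}$, but its lift winds exactly once around the fiber, so after closing up with $v_{c}$ the only surviving contribution is the fiber class, and a sign check against the conventions (over to under, counterclockwise on $S^{1}$) yields precisely $[\gamma_{c}]=-[\{*\}\times S^{1}]=a$. For $R2$ I would show that the two new crossings $c_{1},c_{2}$ carry the parities demanded by Fig.~\ref{fig:label-condition}: the halves $\gamma_{c_{1}}$ and $\gamma_{c_{2}}$ select complementary arcs of the bigon, so by the same identity their classes differ by $[K]$ and therefore agree after passing to the quotient.

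The hard part, and the step I expect to absorb most of the work, is this $R2$ computation together with the normalization in $R1$. Getting the $R1$ sign to land on $-[\{*\}\times S^{1}]$ rather than $+[\{*\}\times S^{1}]$ requires pinning down the over/under and fiber orientations through the covering, and making the $R2$ condition hold \emph{on the nose} requires a careful identification of which arc each canonical half selects and a proof that their difference is exactly $\pm[K]$. This is precisely where quotienting by $[K]$ is indispensable: without it the $R2$ relation would fail, whereas in $H_{1}(S_{g}\times S^{1})/[K]$ it becomes an equality. Once these two local models are settled, the theorem follows by checking the list of conditions in Fig.~\ref{fig:label-condition} move by move.
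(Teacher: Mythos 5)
Your proposal has the right overall shape (verify the conditions of Fig.~\ref{fig:label-condition} move by move, using the identity $[\gamma_{c}]+[\gamma_{c}']=[K]$ for the two complementary halves at a single crossing), but it misplaces where the fixed element $a=-[\{*\}\times S^{1}]$ actually enters the axioms, and this is a structural error, not a sign ambiguity. The first Reidemeister move takes place entirely inside a chart $S_{g}\times[0,1]\subset S_{g}\times S^{1}$: for a positive kink the half $\gamma_{c}$ is the kink together with the short vertical segment $v_{c}$, which is contractible, so $[\gamma_{c}]=0$; for a negative kink one writes $\gamma_{c}'\cdot\gamma_{c}=K$ with $[\gamma_{c}']=0$ and again gets $[\gamma_{c}]=0$ in the quotient. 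The lift of an R1 kink does not wind around the fiber at all, so your claimed normalization $[\gamma_{c}]=-[\{*\}\times S^{1}]$ for R1 is false (consistently, the label of Section 2 vanishes on an R1 crossing). The fixed element enters instead at the move you dispose of in one line: the crossing change realized by two double lines (move 4/4'). There the two vertical segments satisfy $v_{c}*v_{c'}\sim\{*\}\times(-S^{1})$ because the arc is dragged around the $S^{1}$ factor, and the computation gives $[\gamma_{c}]+[\gamma_{c'}]=[K]-[\{*\}\times S^{1}]$, i.e.\ $p(c)+p(c')=a$ in the quotient. Your version, that $[\gamma_{c}]$ is sent to $-[\gamma_{c}]$ modulo $[K]$, drops exactly the fiber term; with your conventions the fixed element would be $0$ and you would not have proved the stated theorem.

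There are two further gaps. For R3 the conditions do not merely require that the three parities be preserved under the move: they require the additive relation $[\gamma_{b}]=[\gamma_{a}]+[\gamma_{c}]$ among the crossings of the triangle (this is what makes the labels of Section 2 a winding parity, since crossing labels are differences of arc heights), and in one of the two closure patterns this relation holds only because $[K]=0$ in the quotient. The paper spends half of its move-3 argument on precisely this, while your proposal treats R3 as routine invariance under an isotopy across a disc. For R2, the two halves $\gamma_{c},\gamma_{c'}$ are not complementary arcs: they differ by the boundary of the bigon disc, hence are equal already in $H_{1}(S_{g}\times S^{1})$, before any quotient is taken; the complementary-arc reading would give $p(c)+p(c')=[K]$, i.e.\ $p(c)=-p(c')$, which is not the axiom. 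Consequently your remark that quotienting by $[K]$ is indispensable for R2 is backwards: the quotient is needed for the negative R1 kink, for one case of R3, and for move 4, but not for R2.
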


\begin{proof}
For move 1, let us show that, when a positive (see Fig.\ref{fig:proof-rotation-move1-1}) or negative (see Fig.\ref{fig:proof-rotation-move1-2}) kink disappears, the corresponding crossing $c$ has a homological parity $[\gamma_{c}]=0$ in $H_{1}(S_{g} \times S^{1})/[K]$. For a positive kink, the curve $\gamma_{c}$ is exactly the kink with vertical line. In this case, since $\gamma_{c}$ is contractible, one can see that $[\gamma_{c}]=0$ in $H_{1}(S_{g} \times S^{1})/[K]$. 
\begin{figure}[h!]
\begin{center}
 \includegraphics[width = 5cm]{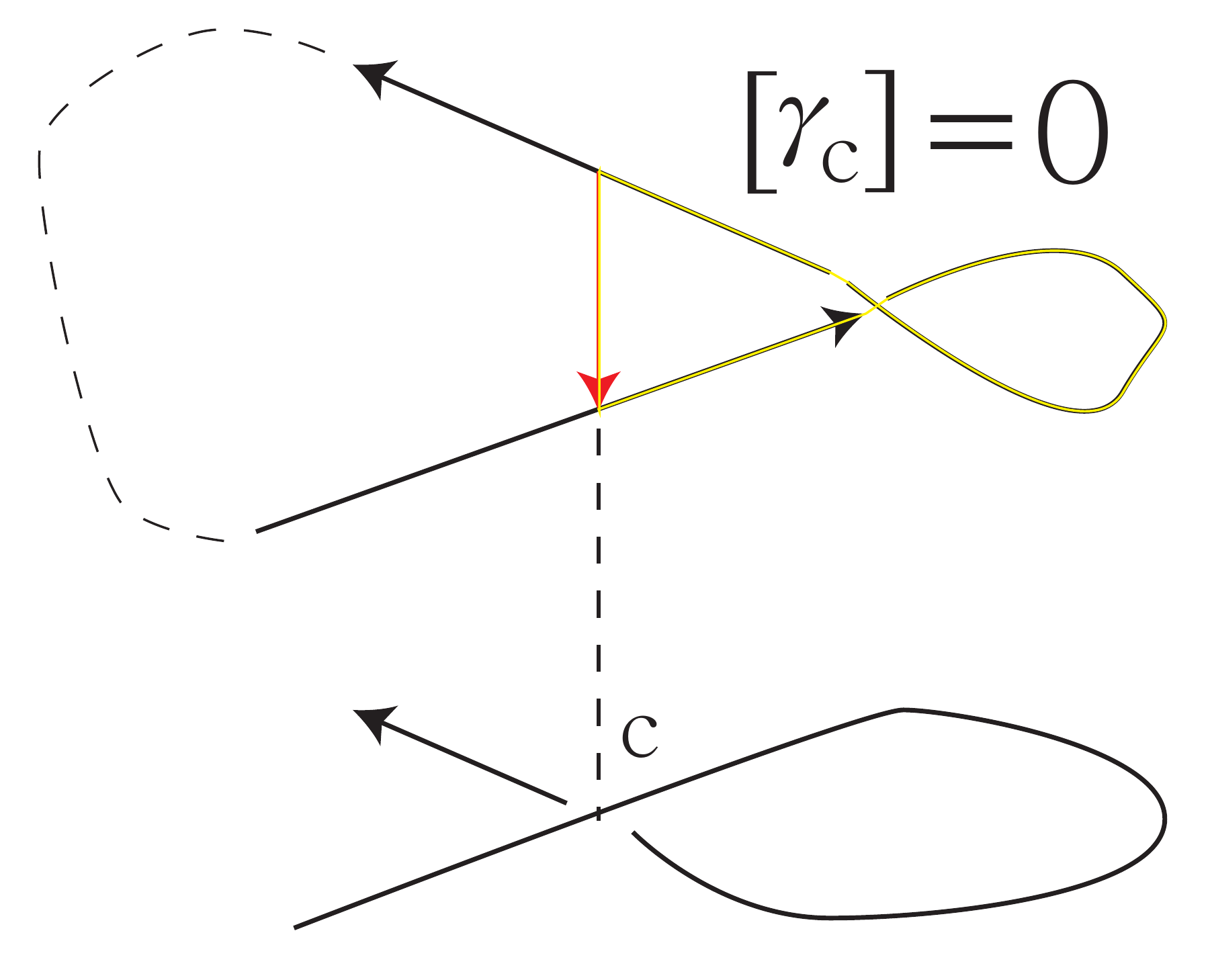}
\end{center}
\caption{Move 1: A positive kink}\label{fig:proof-rotation-move1-1}
\end{figure}
For a negative kink, let us consider a closed curve $\gamma_{c}'$ such that $\gamma_{c}'\cdot\gamma_{c} = K$. Then $\gamma_{c}'$ contains the kink with vertical link and $[\gamma_{c}']=0$ in $H_{1}(S_{g} \times S^{1})/[K]$. Since $[K] = [\gamma_{c}'\cdot\gamma_{c}] = [\gamma_{c}']+[\gamma_{c}] =0$ in $H_{1}(S_{g} \times S^{1})/[K]$, one can see that $[\gamma_{c}]=0$.
\begin{figure}[h!]
\begin{center}
 \includegraphics[width = 5cm]{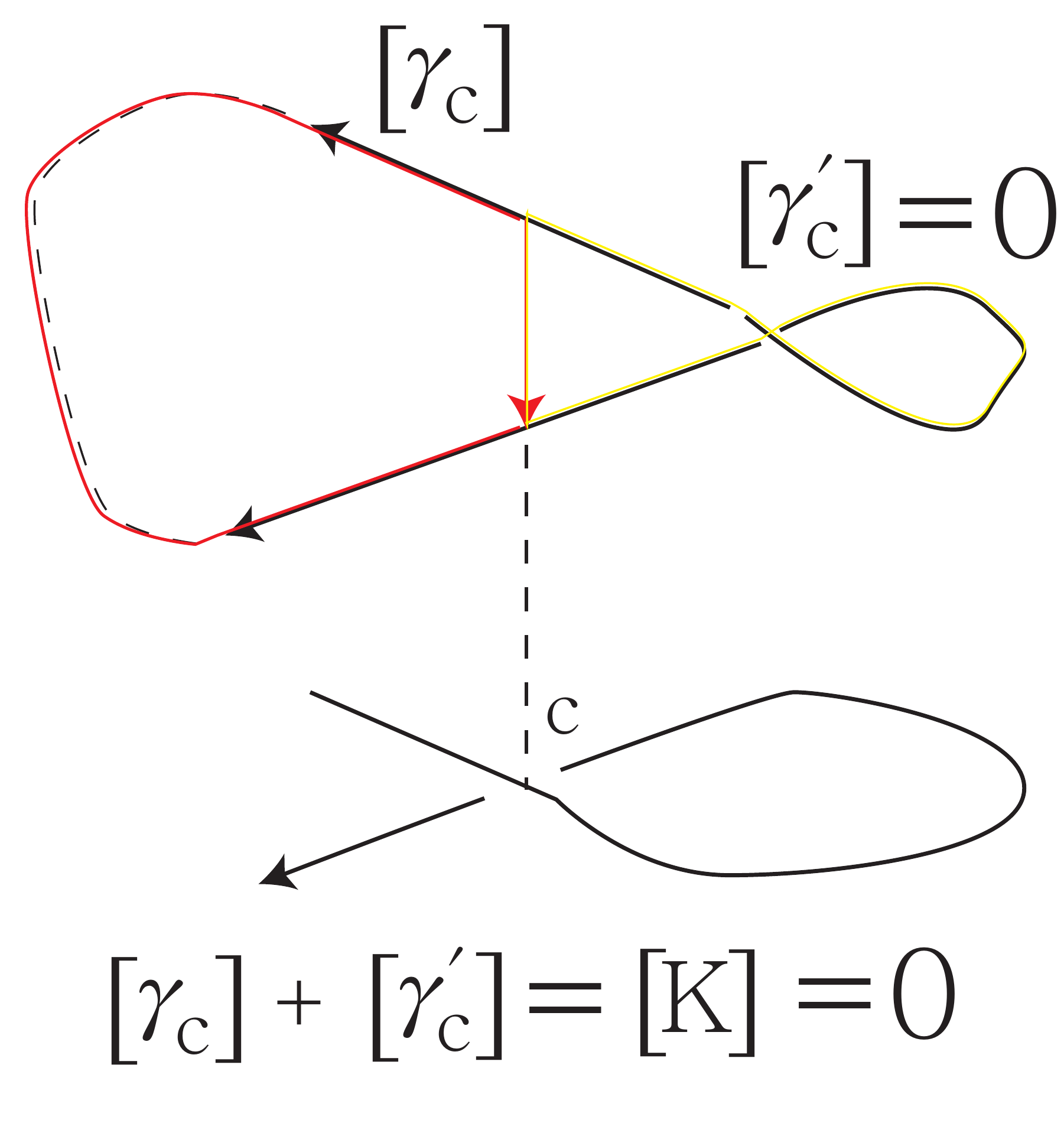}
\end{center}
\caption{Move 1: A negative kink}\label{fig:proof-rotation-move1-2}
\end{figure}

For move 2, we have two cases as described in Fig.\ref{fig:proof-rotation-move2-1} and Fig.\ref{fig:proof-rotation-move2-2}. In Fig.\ref{fig:proof-rotation-move2-1} it is obvious that there is a disc with boundary $C=u_{cc'}\cup v_{c'} \cup -u_{cc'} \cup -v_{c}$ and  $C*\gamma_{c}$ and $\gamma_{c'}$ are homotopic. Therefore $[\gamma_{c}]=[\gamma_{c'}]$ in $H_{1}(S_{g} \times S^{1})/[K]$.
\begin{figure}[h!]
\begin{center}
 \includegraphics[width = 7cm]{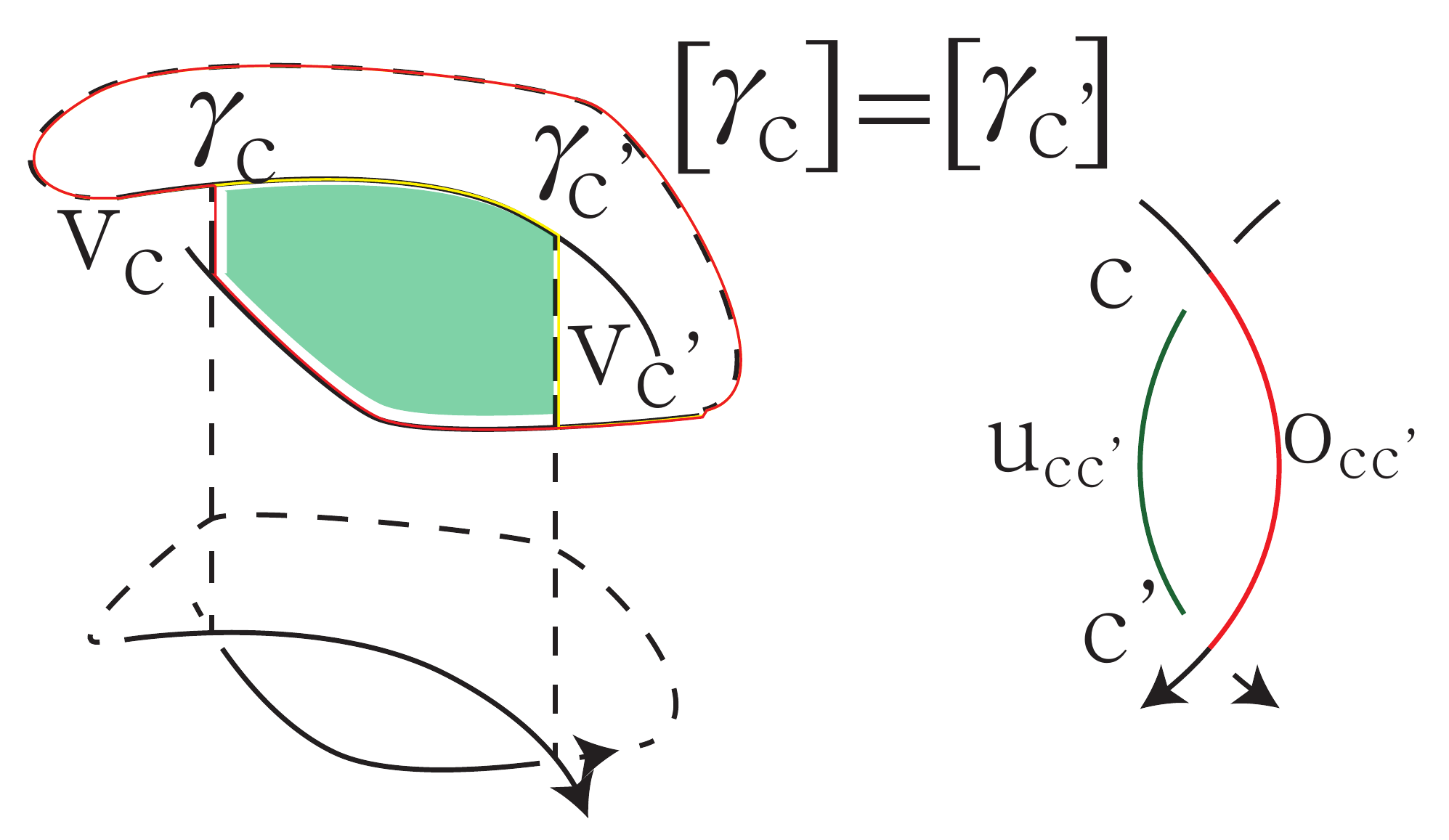}
\end{center}
\caption{Move 2, Case 1}\label{fig:proof-rotation-move2-1}
\end{figure}
Analogously in the case described in Fig.\ref{fig:proof-rotation-move2-2} one can show that $[\gamma_{c}]=[\gamma_{c'}]$ in $H_{1}(S_{g} \times S^{1})/[K]$.
\begin{figure}[h!]
\begin{center}
 \includegraphics[width = 7cm]{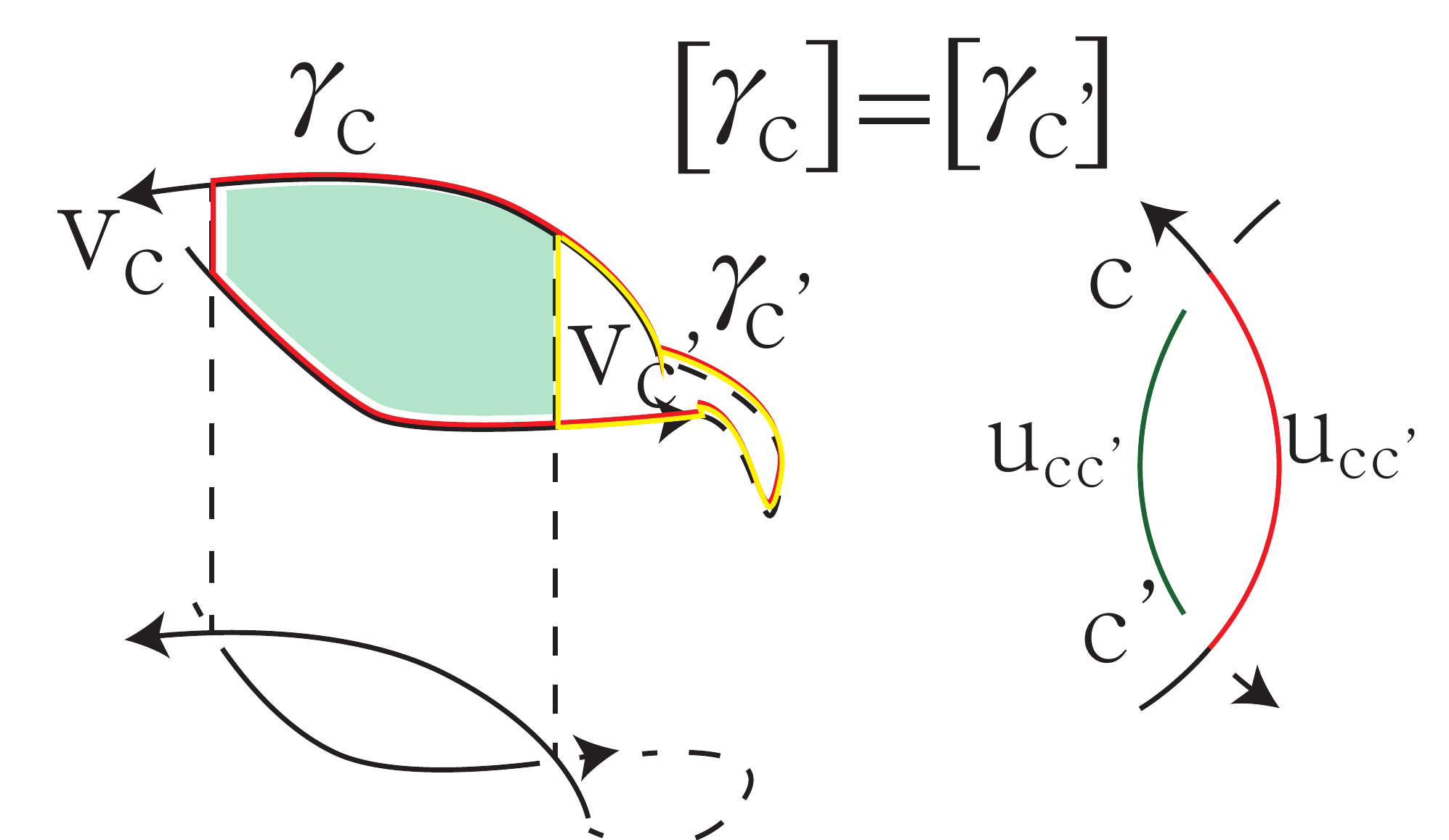}
\end{center}
\caption{Move 2, Case 2}\label{fig:proof-rotation-move2-2}
\end{figure}

For move 3, see Fig.\ref{fig:proof-rotation-move3-1} and \ref{fig:proof-rotation-move3-2}. We schemetically describe $\gamma$'s for crossings $a,b,c,a',b',c'$ in the move 3. First, since two closed curve $u_{bc} * v_{c}* (-u_{ac})*v_{a}*u_{ab} * (-v_{b})$ and $u_{c'b'}*v_{b'}*u_{b'a'}*(-v_{a'})*(-u_{c'a'})*(-v_{c'})$ are contractible, analogously to the proof for move 2, one can show that $[\gamma_{a}]= [\gamma_{a'}]$, $[\gamma_{b}]= [\gamma_{b'}]$ and $[\gamma_{c}]= [\gamma_{c'}]$ in both cases. 

If ends of strings in move 3 connected as described in Fig.\ref{fig:proof-rotation-move3-1},  

\begin{figure}[h!]
\begin{center}
 \includegraphics[width = 9cm]{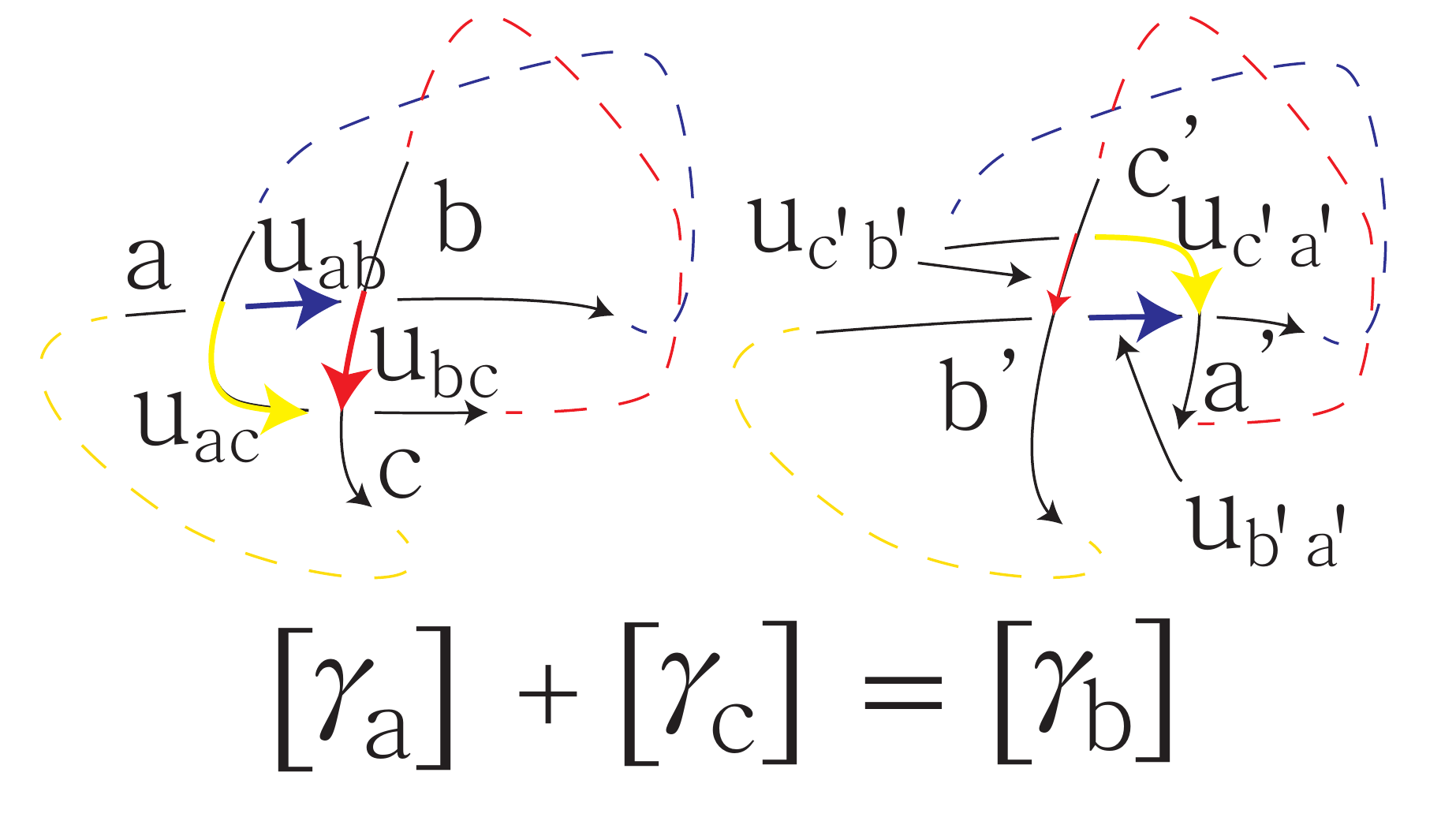}
\end{center}
\caption{Move 3, Case 1}\label{fig:proof-rotation-move3-1}
\end{figure}

then one can obtain that
\begin{eqnarray*}
\gamma_{b} &\sim& (-u_{ab})*\gamma_{a}* u_{ab}* (-v_{b} *u_{bc} * v_{c})*\gamma_{c} * (-v_{c} *(-u_{bc})*v_{b}) \\
&\sim& u_{ab}^{-1}*\gamma_{a} * u_{ab}*(-v_{b} *u_{bc} * v_{c})*\gamma_{c} *(-v_{b} *u_{bc} * v_{c})^{-1}.
\end{eqnarray*}
Therefore 
\begin{eqnarray*}
[\gamma_{b}] &=& [u_{ab}^{-1}*\gamma_{a} * u_{ab}*(-v_{b} *u_{bc} * v_{c})*\gamma_{c} *(-v_{b} *u_{bc} * v_{c})^{-1}] \\
&=& [u_{ab}^{-1}*\gamma_{a} * u_{ab}] + [(-v_{b} *u_{bc} * v_{c})*\gamma_{c} *(-v_{b} *u_{bc} * v_{c})^{-1}]\\
&=& [\gamma_{a}]+[\gamma_{c}].
\end{eqnarray*}

If ends of strings in move 3 connected as described in Fig.\ref{fig:proof-rotation-move3-2},  
\begin{figure}[h!]
\begin{center}
 \includegraphics[width = 9cm]{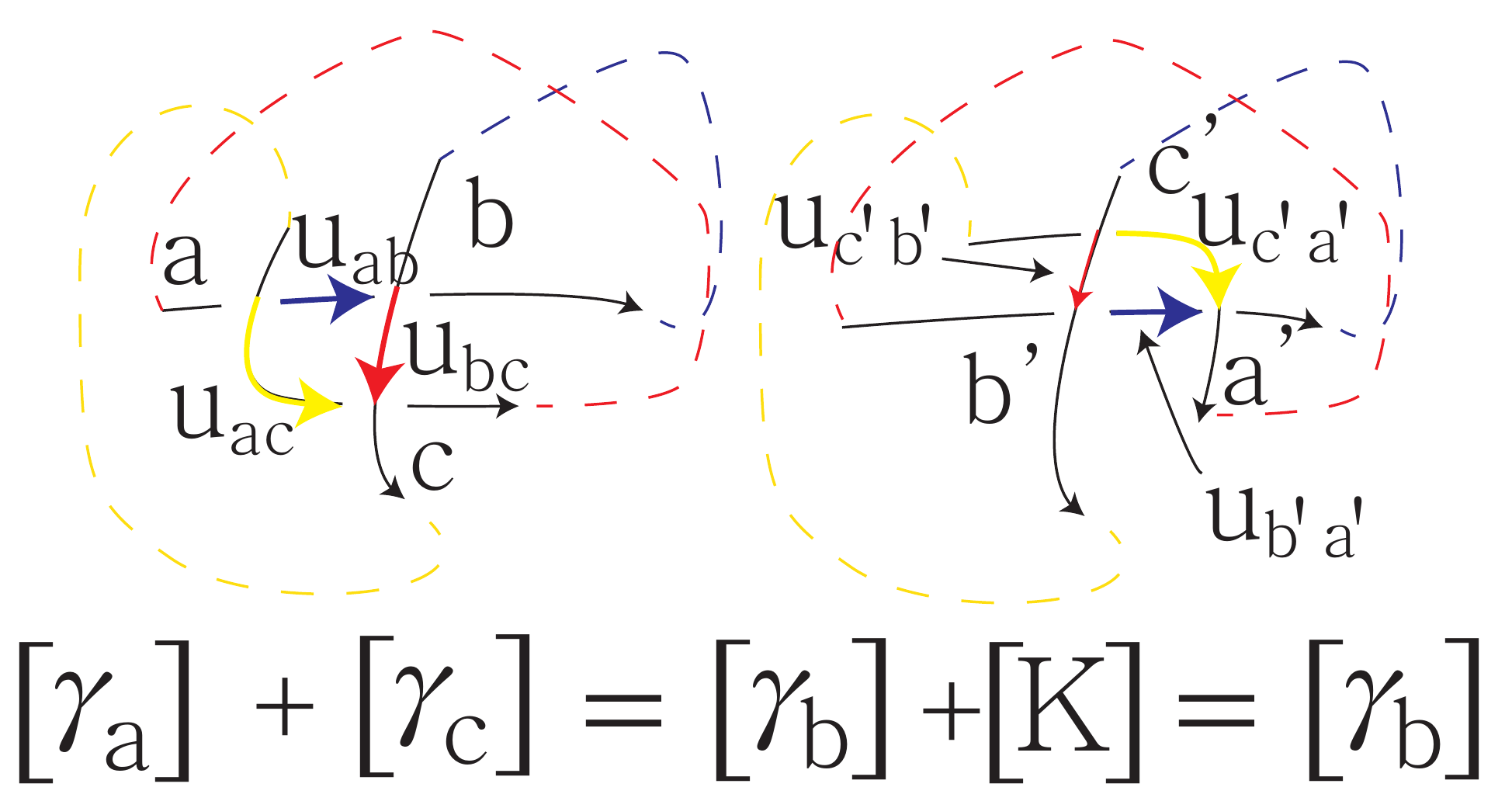}
\end{center}
\caption{Move 3, Case 2}\label{fig:proof-rotation-move3-2}
\end{figure}
analogously to the previous case we obtain $[\gamma_{a}]+[\gamma_{c}] = [K]+[\gamma_{b}] = [\gamma_{b}] $ in $H_{1}(S_{g}\times S^{1})/[K]$.

For move 4, see Fig.\ref{fig:proof-rotation-move4}. Note that $\gamma_{c}*v_{c}^{-1}* \gamma_{c'}* v_{c'}^{-1} \sim \gamma_{c}*v_{c}^{-1}* v_{c'}^{-1} * v_{c'} *\gamma_{c'}* v_{c'}^{-1} \sim K$ and 
$v_{c} * v_{c'} \sim \{*\} \times (-S^{1})$ where $*$ is an arbitrary point on $v_{c}$. Therefore we obtain that
\begin{eqnarray*}
0 = [K] &=& [ \gamma_{c}*v_{c}^{-1}* v_{c'}^{-1} * v_{c'} *\gamma_{c'}* v_{c'}^{-1}] \\
&=& [ \gamma_{c}]+ [v_{c}^{-1}* v_{c'}^{-1}] + [ v_{c'} *\gamma_{c'}* v_{c'}^{-1}] \\&=& [\gamma_{c}]+[\gamma_{c'}] - [v_{c}^{-1}* v_{c'}^{-1}]\\
&=& [\gamma_{c}]+[\gamma_{c'}] - [\{*\} \times (-S^{1})]
\end{eqnarray*}
and hence
$$[ \gamma_{c}] + [ \gamma_{c'}] = [K] + [\{*\} \times (-S^{1})] = [K] - [\{*\} \times S^{1}].$$
\begin{figure}[h!]
\begin{center}
 \includegraphics[width = 7cm]{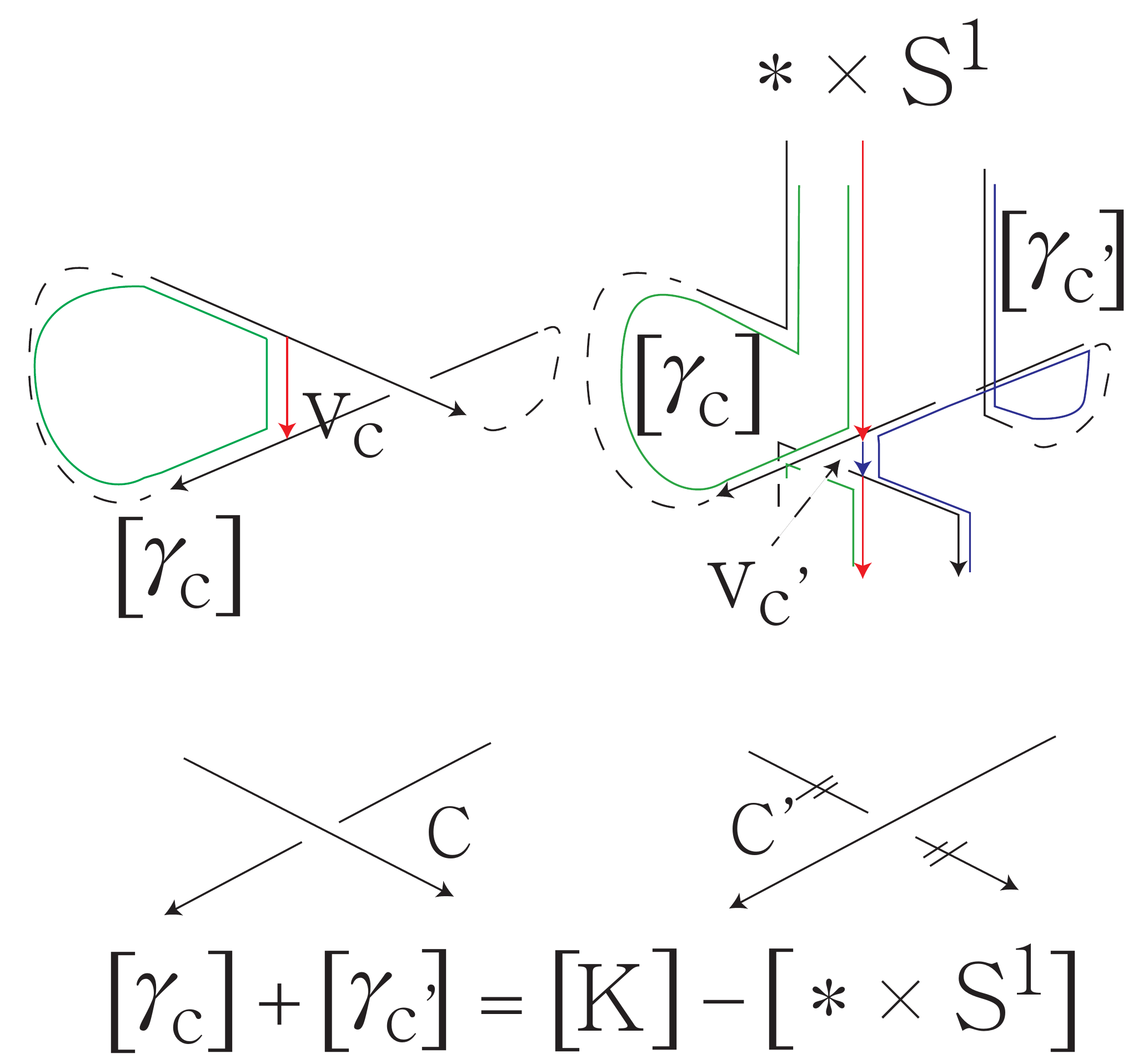}
\end{center}
\caption{Move 4}\label{fig:proof-rotation-move4}
\end{figure}

\end{proof}

\begin{rem}
From the proof of the previous theorem one can see that the fixed element $a$, which is appeared in the condition $i+j =a$ for move 4', is determined by $S^{1}$ which the arc follows along when we do crossing change with two additional double lines. In other words, if we fixed another $a \in H_{1}(S_{g}\times S^{1})/[K]$, then it present that when we do crossing change with two additional double lines, an arc turns around along a closed curve $C$ in $S_{g}\times S^{1}$ such that $[C] = a$ in $H_{1}(S_{g}\times S^{1})/[K]$.
\end{rem}
By the K\"{u}nneth theorem, $H_{1}(S_{g}\times S^{1}) \cong H_{1}(S_{g}) \oplus H_{1}(S^{1})$. Let $p_{1}: H_{1}(S_{g}\times S^{1}) \rightarrow H_{1}(S_{g})$ and $p_{2}: H_{1}(S_{g}\times S^{1}) \rightarrow H_{1}(S^{1})$ be projections. Then we obtain that 
\begin{eqnarray*}
H_{1}(S_{g}\times S^{1})/[K] &\rightarrow& H_{1}(S_{g}\times S^{1})/([p_{1}(K)] \oplus [p_{2}(K)]) \\&\cong& H_{1}(S_{g})/[p_{1}(K)] \oplus H_{1}(S^{1})/[p_{2}(K)]
\end{eqnarray*}
From the previous calculation, one can see the following corollaries.
\begin{cor}
$\{p_{2}([\gamma_{c}])\}$ is the label described in Section 2.
\end{cor}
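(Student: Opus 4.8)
The plan is to realize the projection $p_2$ concretely as the $S^1$-winding number and to evaluate it by lifting $\gamma_c$ to the infinite cyclic cover $S_g\times\mathbb{R}$, which is exactly the cover used to define the arc labels in Section 2. Fix the identification $H_1(S^1)\cong\mathbb{Z}$ sending the generator $[\{*\}\times S^1]$ to $1$. Under this identification, for any loop $\ell$ in $S_g\times S^1$ the class $p_2([\ell])$ is the winding number of $\ell$ around the $S^1$-factor, and this winding number equals the net change $\phi_2(\tilde\ell(1))-\phi_2(\tilde\ell(0))$ of the $\mathbb{R}$-coordinate when $\ell$ is lifted along $Id_{S_g}\times\Pi$. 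Since this net change is additive under concatenation of paths, I may compute it separately on the pieces of $\gamma_c$.

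Next I would decompose $\gamma_c$ as the concatenation $\alpha * v_c$, where $\alpha$ is the half-arc of $K$ running from the under-pre-image $U$ of $c$ to the over-pre-image $O$ of $c$, and $v_c$ is the vertical fibre segment oriented from $O$ back to $U$, so that $\alpha*v_c$ is a closed loop. Using the fixed lift $\hat K$ of $K$ from Section 2, the half-arc $\alpha$ lifts to a subarc of $\hat K$ whose endpoints have heights $h_U=\phi_2\hat K(t_1)$ and $h_O=\phi_2\hat K(t_2)$; by the very definition of the labels these satisfy $\lfloor h_U\rfloor = a$ and $\lfloor h_O\rfloor = b$, where $a,b$ are the under- and over-labels at $c$. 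Hence the net height change along $\alpha$ is $h_O-h_U$. The segment $v_c$ is the straight fibre segment joining the two pre-images inside the single fundamental domain $S_g\times[0,1]$ cut out at $S_g\times\{x_0\}$, so its lift contributes the net height change $\{h_U\}-\{h_O\}$ (the difference of the fractional parts, a number in $(-1,1)$). Adding the two contributions yields
\[
p_2([\gamma_c]) = (h_O-h_U)+(\{h_U\}-\{h_O\}) = \lfloor h_O\rfloor-\lfloor h_U\rfloor = b-a,
\]
which is precisely the label of the crossing $c$ defined in Section 2. Reducing modulo $[p_2(K)]$ (which corresponds under the above identification to $\deg(K)$) gives the asserted equality in $H_1(S^1)/[p_2(K)]$, and running over all crossings gives the statement for the whole family.

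The step I expect to be the main obstacle is the bookkeeping for $v_c$: one must verify that, because $v_c$ is the straight vertical segment joining $O$ and $U$ within one fundamental domain $S_g\times[0,1]$, its lift changes the height by exactly $\{h_U\}-\{h_O\}\in(-1,1)$ rather than by an integer translate of this, so that the telescoping $(h_O-h_U)+(\{h_U\}-\{h_O\})=\lfloor h_O\rfloor-\lfloor h_U\rfloor$ is forced and the total winding lands on the single integer $b-a$. Once this and the orientation conventions (which pre-image is the over-strand, and the direction of $v_c$) are pinned down, the remaining computation is purely formal. The degenerate case $\deg(K)=0$, in which the lift of $K$ is a disjoint union of loops rather than a single line, requires no separate argument, since the winding-number interpretation of $p_2$ applies to each individual lifted path regardless of connectedness.
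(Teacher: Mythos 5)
Your proposal is correct and takes essentially the approach the paper intends: the paper states this corollary without proof, as a consequence of the K\"{u}nneth projection, and your computation --- identifying $p_{2}$ with the $S^{1}$-winding number and evaluating it on $\gamma_{c}=\alpha * v_{c}$ via the cover $S_{g}\times\mathbb{R}$ used to define the labels --- supplies exactly the omitted details. The one imprecision is that when the half-arc $\alpha$ passes through the basepoint $K(0)$ it does \emph{not} lift to a subarc of $\hat{K}$ (its lift ends at height $h_{O}+\deg(K)$, giving total winding $b-a+\deg(K)$ rather than $b-a$), but this discrepancy is a multiple of $[p_{2}(K)]$ and therefore vanishes in the quotient $H_{1}(S^{1})/[p_{2}(K)]$ where the statement lives, so the reduction you already perform at the end absorbs it.
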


\begin{cor}
$\{sgn(c)p_{1}([\gamma_{c}])\}$ is an oriented parity.
\end{cor}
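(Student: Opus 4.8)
The plan is to derive this corollary directly from the main theorem and the preceding Remark, which asserts that any winding parity with fixed element $a=0$ yields an oriented parity after multiplication by the crossing sign. Accordingly, it suffices to verify that the family $\{p_{1}([\gamma_{c}])\}$ is a winding parity whose fixed element is $0$ in $H_{1}(S_{g})/[p_{1}(K)]$; the corollary then follows by applying the Remark to it.

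First I would note that $\{p_{1}([\gamma_{c}])\}$ is nothing but the homological winding parity $\{[\gamma_{c}]\}$ post-composed with the group homomorphism
$$H_{1}(S_{g}\times S^{1})/[K] \To H_{1}(S_{g})/[p_{1}(K)]$$
induced by the K\"{u}nneth projection $p_{1}$, exactly the map constructed just before the statement (it is well defined since $p_{1}$ sends $[K]$ to $[p_{1}(K)]$, which is killed in the target). Each winding parity condition verified in the proof of the theorem is an equality of homology classes: $[\gamma_{c}]=0$ for move 1, $[\gamma_{c}]=[\gamma_{c'}]$ for move 2, the additive relation $[\gamma_{a}]+[\gamma_{c}]=[\gamma_{b}]$ for move 3, and $[\gamma_{c}]+[\gamma_{c'}]=a$ for move 4. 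Since every such relation is linear in the ambient abelian group, it is preserved under the homomorphism above; hence $\{p_{1}([\gamma_{c}])\}$ satisfies all the winding parity conditions, now with fixed element $p_{1}(a)$.

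It then remains to compute $p_{1}(a)$. By the theorem $a=-[\{*\}\times S^{1}]$, and under the K\"{u}nneth splitting $H_{1}(S_{g}\times S^{1})\cong H_{1}(S_{g})\oplus H_{1}(S^{1})$ the fiber class $[\{*\}\times S^{1}]$ lies entirely in the $H_{1}(S^{1})$ summand. Therefore $p_{1}([\{*\}\times S^{1}])=0$ and hence $p_{1}(a)=0$ in $H_{1}(S_{g})/[p_{1}(K)]$, so $\{p_{1}([\gamma_{c}])\}$ is a winding parity with $a=0$. Invoking the Remark, $\{sgn(c)\,p_{1}([\gamma_{c}])\}$ is an oriented parity, as claimed.

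I do not expect a serious obstacle here, since the substantive work was already done in the theorem; the only point demanding a little care is the first step, where one should confirm that post-composing with a group homomorphism preserves every winding parity axiom, including those sensitive to the orientations appearing in the moves. This is immediate once one records, as above, that each axiom is a purely additive identity among the classes $[\gamma_{c}]$, so the homomorphism transports it verbatim.
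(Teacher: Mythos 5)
Your proposal is correct and follows essentially the same route the paper intends: the paper's own justification is exactly the Künneth projection computation (the induced map $H_{1}(S_{g}\times S^{1})/[K] \to H_{1}(S_{g})/[p_{1}(K)]$) combined with the earlier Remark that a winding parity with fixed element $a=0$ becomes an oriented parity after multiplying by $sgn$. You merely make explicit what the paper leaves implicit, namely that post-composition with a group homomorphism preserves the (purely additive) winding parity axioms and that $p_{1}$ kills the fiber class $[\{*\}\times S^{1}]$, so the fixed element becomes $0$.
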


\subsection{Universal winding parity}
\begin{dfn}
A parity $p^{w}_{u}$ with coefficients in $A^{w}_{u}$ is called {\em a universal winding parity} if for any winding parity $p^{w}$ with coefficients in $A$ there exists a unique homomorphism of group $\rho:A^{w}_{u} \rightarrow A$ such that $p^{w}_{K} = \rho \circ (p^{w}_{u})_{K}$ for any diagram $K$. 
\end{dfn}

\begin{figure}[h!]
\begin{center}
 \includegraphics[width = 4cm]{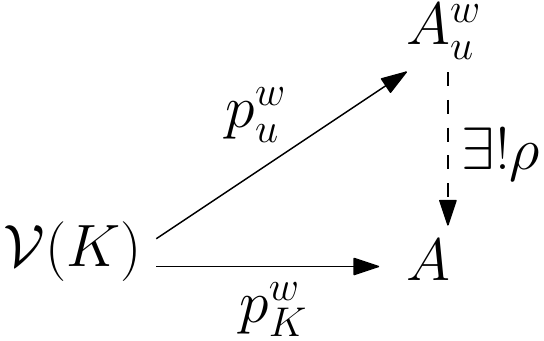}
\end{center}
\end{figure}

Let $K$ be a knot diagram with double lines. Denote by $1_{K,v}$ and  $1_{K,a}$ the generators of the directed summand in the group $\bigoplus_{K}(\bigoplus_{v\in \mathcal{V}(K)} \langle 1_{K,v} 
\rangle   \oplus \langle 1 \rangle )$ corresponding to the vertex $v$.
Let $A^{w}_{u}$ be the group 
$$A^{w}_{u} =  \bigoplus_{K}(\bigoplus_{v\in \mathcal{V}(K)} \langle 1_{K,v} 
\rangle  \oplus \langle 1 \rangle ) /\mathcal{R},$$
where $\mathcal{R}$ is the set of relations of five types.
\begin{enumerate}
\item $1_{K',f_{*}(v)} = 1_{K,v}$ if $v\in \mathcal{V}(K)$ and there exists $f_{*}(v) \in \mathcal{V}(K')$;
\item $1_{K,f_{*}(v_{1})}=1_{K,f_{*}(v_{2})}$ if $f$ is a decreasing second Reidemeister move and $v_{1}$ and $v_{2}$ are the disappearing crossings;
\item $1_{K,f_{*}(v_{1})}-1_{K,f_{*}(v_{2})}+1_{K,f_{*}(v_{3})}= 0$ if $f$ is a third Reidemeister move and $v_{1}, v_{2}, v_{3}$ are the crossing participating in this move such that $v_{1}$ and $v_{3}$ have the middle long arc.
\item $1_{K',f_{*}(v)} = -1_{K,v} +1$ if $f$ is the ``crossing change'' at $v$ along $S^{1}$.
\end{enumerate}

If $p^{w}$ is a parity with coefficients in an abelian group $A$ with fixed element $a \in A$, one define the map $\rho : A_{u} \rightarrow A$ in the following way:
$$\rho(\sum_{K,v \in \mathcal{V}(K)} \lambda_{K,v} 1_{K,v} + \lambda 1) = \sum_{K,v \in \mathcal{V}(K)}  \lambda_{K,v}p^{w}_{v} + \lambda a.$$ 

The universal winding parity is defined analogously to the universal parity, which is introduced in \cite{Manturov_vir_book}. Moreover, a parity, defined by using underlying surfaces of virtual knots, is a universal parity. From this observation the author expects that the homological winding parity gives a universal winding parity.\\[2mm]

{\bf Acknowledgement} The author give thanks to V.O. Manturov, I.M. Nikonov, B. Kim, S. Choi and L. Kauffman for plentiful comments. The author is granted from Russian Foundation for Basic Research (RFBR) grants 20-51-53022 and 19-51-51004.

\end{document}